\providecommand{\U}[1]{\protect\rule{.1in}{.1in}}
\providecommand{\U}[1]{\protect\rule{.1in}{.1in}}
\newtheorem{theorem}{Theorem}
\newtheorem{definition}[theorem]{Definition}
\newtheorem{lemma}[theorem]{Lemma}
\newtheorem{proposition}[theorem]{Proposition}
\newtheorem{remark}[theorem]{Remark}
\begin{document}

\title{Complex Germen on invariant isotropic tori under the Hamiltonian phases flow  with in involution Hamilton functions }

\author{\ A. C. Alvarez
	\thanks{Instituto Nacional de Matem\'{a}tica Pura e Aplicada, Estrada Dona
		Castorina 110, 22460-320 Rio de Janeiro, RJ, Brazil. E-mail:
		amaury@impa.br} , \  Baldomero Vali\~no Alonso \thanks{Havana University}
}

\maketitle

\begin{abstract}
	M. M. Nekhoroshev put forward the problem of to find the Complex Germ on a isotropic invariant torus with respect to Hamiltonian phases flows which come from k-functions in involution. This statement was partially solved in \cite{pno} establishing that if certain simplectic operator has 
a simple spectrum then the complex germ exist. In this work we solve this problem, providing a full solution, i.e. we present conditions for the existence and uniqueness of complex germ through the monodromy  operator constructed in \cite{pno}, but without the simple spectrum condition. We study also the Hamiltonian system with cyclic variables.
\end{abstract}

\section{Introduction}
In several problems of the quantum and theoretical physics approximated solution to the partial
differential equations which contain a small parameter in the higher derivative order are obtained, as well as to approximated eigenvalues
and eigenvector of self-adjoint differential operator which depend on a small parameter. In such problems have been used
the asymptotic methods \cite{maslov,maslov2,maslov3}, which nowadays are developed widely in several branches of the physics-mathematics.
It is well known the success of asymptotic methods, e.g. with the quantification method was solved
the older and sharp problem of the mechanic classic: calculation of the energetic level of the hydrogen atom \cite{belov}. 

In \cite{maslov2} over a 2n-dimensional phases space to obtain an asymptotic quasiclassical solution with respect
to a small parameter on an isotropic tori k-dimensional ($k < n$) is obtained. This asymptotic on a torus is accomplished with a new geometric
object which was called the Complex Germen .i.e. a family of complex planes with certain properties. Such object does
not exist over any isotropic manifold. In such sense, V. P. Maslov put forward and solved the problem about its existence and
construction techniques.
At the same time, the uniqueness of Complex Germ has a great signification such that asymptotic be well defined. In \cite{bal1} were completed
the results obtained in \cite{maslov2}, as well as have solved the uniqueness problem for the singular point and a closed
trajectory of the Hamiltonian system. Further, was solved of existence and uniqueness of Germen on
isotropic torus to Hamiltonian with cyclic variable.

Some more later, M. M. Nekhoroshev put forward this problem but on isotropic invariant torus with respect to Hamiltonian phases flows
which come from k-functions in involution. This statement was partially solved in \cite{pno} establishing  that if certain
simplectic operator has a simple spectrum then the complex germ exist. In this work we solve this problem,
providing a full solution, i.e. we present conditions for the existence and uniqueness of complex germ through the monodromy operator constructed
in \cite{pno}, but without the simple spectrum condition. We study also the Hamiltonian system with cyclic variables.

\section{Preliminaries} 
\label{Chapter2} 



Let $M^{2n}$ be a $2n$-dimensional differentiable manifold with local coordinates
$(p,q)$. We assume $C^{\infty}$ manifolds and scalar functions and vectorial fields as well. Let us introduce basic results, details can be found in \cite{arnold1}. Let $T_mM^{2n}$ be the tangent space at point $m$ of manifold $M^{2n}$. We introduce natural manifold on $TM^{2n}=\displaystyle \cup_{m \in M^{2n}}T_mM^{2n}$.
\begin{definition}
 It is called exterior form of degree two or 2-form at point $m$ on the manifold $M^{2n}$ to
 the bilinear and
 antisymmetric application $\omega^2: T_mM^{2n} \times T_mM^{2n} \rightarrow \Re$ , i.e.
 
 \begin{itemize}
  \item $\omega^2(\alpha x + \beta y,z)=\alpha \omega^2(x,z)+\beta\omega^2(y,z)$,
  \item $\omega^2(x,y)=-\omega^2(y,x)$,
   \end{itemize}
for all $x,y,z \in T_mM^{2n}$ and $\alpha, \beta \in \Re$ (see \cite{arnold2}).
\end{definition}

A 2-form $\omega^2$ is closed if $d\omega^2\equiv0$, where $d : \Omega^2(M^{2n}) \rightarrow \Omega^3(M^{2n})$ is
an operator of exterior differentiation on the space of the 2-form $\Omega^2(M^{2n})$. Besides, it is called
non-degenerate if  $\omega^2(x,y)=0$ for all $x \in T_mM^{2n}$ then $y=0$.

\begin{definition}
 A closed, non-degenerate and differential 2-form $\omega^2$ on the manifold $M^{2n}$ is called symplectic structure. The couple $(M^{2n},\omega^2)$ is called symplectic manifold and the tangent space 
 in each point $m$ of the manifold is called symplectic vectorial field whose symplectic structure is the restriction
 of $\omega^2$ to $T_mM^{2n} \times T_mM^{2n}$.
\end{definition}
One example of symplectic structure consist of $M^{2n}=\Re^{2n}$ with $\omega^2=dq \wedge dp$, where
\begin{equation}
 dq \wedge dp(x,y)= \displaystyle \sum_{i=1}^{n}dq_i(x)dp_i(y)-dq_i(y)dp_i(x), \quad \forall x,y \in T_m\Re^{2n}
\end{equation}
and  for all $ m \in \Re^{2n}$. In this case $T_m\Re^{2n}$ is identified with $\Re^{2n}$, where the 1-forms $dq_i$ and $dp_i$
are defined as
\begin{equation}
 dq_i(x)=q_i(x) \quad \text{and} \quad  dp_i(x)=p_i(x), 
\end{equation}
where $q_i$ and $p_i$: $\Re^{2n} \rightarrow \Re$ are the coordinates system. i.e. $q_i(x)$ ($p_i(x)$) are $i$-th ($n+i$-th) coordinates
of the vector $x$ in a prefixed basis of the real linear space $\Re^{2n}$. The symplectic structure defined in this way is called \textit{standard}.

\begin{definition}
 The coordinates of the local chart $(q_1,\ldots,q_n,p_1,\ldots,p_n)$  of a symplectic manifold are called canonical if
 the expression of the symplectic structure $\omega^2$ in this coordinate system coincides with the standard.
 \label{poi2a}
\end{definition}
As consequence of the Darboux Theorem, in each point of a symplectic manifold, there is a neighbour with canonical coordinates \cite{arnold1}.

\begin{definition}
 Let $M^{2n}$ be a symplectic manifold and let $T_mM^{2n}$ and $T_m^{*}M^{2n}$ be tangent and cotangent spaces at point $m \in M^{2n}$,
 we define the operator $J : T_m^{*}M^{2n} \rightarrow T_mM^{2n}$ as
 \begin{equation}
  \omega^2(x,Ja)=a(x), \quad \text{for} \quad x \in  T_mM^{2n} ; a \in  T_m^{*}M^{2n}.
 \end{equation}
 The operator $J$ defined in this way constitutes an isomorphism between vectorial spaces.
  \label{poi2}
\end{definition}

\begin{definition}
 The Poisson bracket of two functions $F$, $G$ over the manifold $M^{2n}$ is defined as
 
\begin{equation}
 [F,G]=\omega^2(JdF,JdG),
\end{equation}
where $dF$ and $dG$ are the differentials 1-forms of $F$ and $G$ on $M^{2n}$.
\end{definition}
If $[F,G]=0$, we say that functions $F$ and $G$ are in involution.

From Definition  \ref{poi2}, we have
\begin{equation}
 [F,G]=-dG(JdF)=dF(JdG).
\end{equation}
\begin{definition}
 A Hamiltonian system is the triple $(M^{2n},\omega^2,H)$, where $(M^{2n},\omega^2)$ is a symplectic manifold and functions $H$
 is defined on it. The field $JdH$ is called the Hamiltonian vectorial field. 
\end{definition}

The matrix of the Hamiltonian operator $H$ in canonical coordinates is

\[ \left( \begin{array}{cc}
0 & -I_n  \\
I_n & 0 
\end{array} \right)\] 
where $0$ and $I_n$ denotes the zero and identity n-dimensional matrices. Thus, in canonical coordinates
the Hamiltonian system takes the form 
\begin{equation}
\dot{p}=H_q, ~~ \dot{q}=-H_p,
\end{equation}
where $H_q=(\frac{\partial H}{\partial q_1},\ldots,\frac{\partial H}{\partial q_n})$
and $H_p=(\frac{\partial H}{\partial p_1},\ldots,\frac{\partial H}{\partial p_n})$.

Let us assume that the solution of the Hamiltonian system $(M^{2n},\omega^2,H)$ can be extended
to $\Re$, i.e. for $-\infty < t < +\infty$. In this case, $g^t_Hm$ denotes the value of the solution $\theta$ with initial
condition $\theta(0)=m$, we obtain the application $g^t_H M^{2n} \rightarrow M^{2n}$ for $t$ fixed. This application constitute a one parametric group of
diffeomorphism, i.e.
$g^0_Hm=m$ and $g^{t+s}_Hm=g^t_Hm \circ g^s_Hm$, for all $m \in M^{2n}$. This group is called flow of phases of the Hamiltonian system.
Moreover, the application $g^t_M$ is symplectic for each $t$ fixed, i.e. $(g^t_H)^*\omega^2=\omega^2$, where
\begin{equation}
 (g^t_H)^*\omega^2(x,y)=\omega^2((g^t_H)_{*,m}x,(g^t_H)_{*,m}y),
\end{equation}
for all $m \in M^{2n}$. Here $(g^t_H)_{*,m}$ denotes the derivatives of $(g^t_H)$ for each $t \in \Re$  (see \cite{arnold2}).
\begin{definition}
	Let $\varLambda$ a submanifold of $M^{2n}$ and $T_m\varLambda$ the tangent space of $\varLambda$ at point $m$. The submanifold $\varLambda$ is called isotropic if the symplectic structure $\omega^2$ is null on it, i.e.,
 $\omega^2(x,y)=0, \forall x,y \in T_m\varLambda$ and $m \in M^{2n}$.
\end{definition}

The submanifold is invariant respect to the Hamiltonian system $(M^{2n},\omega^2,H)$ if $IdH(m) \in T_m\varLambda, \forall m \in \varLambda$,
which in terms of the phases flow is rewritten as $g^t_H(\varLambda)=\varLambda$, for all $t \in \Re$, hence
$(g^t_H)_{*,m}(T_m\varLambda)=T_m\varLambda$.

Let us consider the complexification of the linear space $\Re^2$ and a linear operator for a positive integer $n$. The complexification of $\Re^n$ is a n-dimensional linear space $(\Re^{n})^\mathbb{C}$ constructed as follow:
the point in $(\Re^{n})^\mathbb{C}$ is denotes either by $(x,y)$ or $x+iy$ , where $x,y \in \Re^n$.

If $\alpha \cdot V$ denotes the multiplication of a scalar by $V \in \Re^n$ and $V+W$ the sum of two any vectors in $\Re^n$,
the the multiplication of complex scalar $\alpha+i \beta$ by a vector $V+iW \in (\Re^{n})^\mathbb{C}$ and the sum of
two vectors $V_1+iW_1$, $V_2+iW_2$ are defined as:
\begin{align}
 (\alpha+i \beta) \cdot (V+iW)= (\alpha V - \beta W)+ i (\alpha W + \beta V),\\
 (V_1+iW_1) \cdot (V_2+iW_2)= (V_1 V_2 - W_1 W_2)+ i (V_1 W_2 + W_2 V_2),
\end{align}
thus $(\Re^{n})^\mathbb{C}$ constitutes an complex lineal space $((\Re^{n})^\mathbb{C}=\mathbb{C}^n)$.

\begin{remark}
Similarly, for any real vectorial subspace $P$ is possible to define its complexification $P^\mathbb{C}$.
\end{remark}

It is well known, that between the tangent space $T_m(M^{2n})$ to submanifold 2n-dimensional $M^{2n}$ and $\Re^{2n}$ there is an isomorphism  $X : T_m(M^{2n}) \rightarrow \Re^{2n}$. Analogously, we can establish an isomorphism between the complexifications $T_m(M^{2n})^ \mathbb{C}$ and $\Re^\mathbb{C}$.

Let us denote by $A: \Re^m \rightarrow \Re^d$ a $\Re$-linear operator. A complexification of the operator $A$ is a $\mathbb{C}$-linear operator  $A^{\mathbb{C}}: (\Re^m)^\mathbb{C} \rightarrow (\Re^d)^\mathbb{C}$ defined by the relation $A^\mathbb{C}(x+iy)=Ax+iAy$. The following relations are valid:
$(A+B)^\mathbb{C}=A^\mathbb{C}+B^\mathbb{C}$, where $A$ and $B$ are $\Re$-linear operators.

Let us introduce the following concepts: let $\omega^2$ be a real simplectic structure on the manifold 
$M^{2n}$. We call complexification of $\omega^2$ defined on $T_m(M^{2n})^\mathbb{C}$, for all $m \in M^{2n}$ to the form $\omega^\mathbb{C} : T_m(M^{2n})^\mathbb{C} \times T_m(M^{2n})^ \mathbb{C} \rightarrow \mathbb{C}$, given by
\begin{equation}
(\omega^2)^\mathbb{C}(u+iv,x+iy)=(\omega^2(u,x)-\omega^2(v,y))+i(\omega^2(u,y)+\omega^2(v,x)),
\end{equation}
$\forall u+iv,x+iy \in  T_m(M^{2n})^\mathbb{C}$.

\begin{remark}
Since $\omega^2$ is antisimetric then $(1/2i)(\omega^2)^\mathbb{C}(x,\bar{x})$ is real for all $x \neq 0$ in $T_m(M^{2n})$. We denote $(\omega^2)^\mathbb{C}(x,y)=[x,y]$. It is possible to verify that the complexification of the standard simplectic structure on $T_m(M^{2n})$ is the standard simplectic structure on $T_m(M^{2n})^\mathbb{C}$ at any point $m$ of the manifold $M^{2n}$ (\cite{gon}).
\end{remark}

\section{Statement of the problem}

Let $M^{2n}$ be a $2n$-dimensional simplectic manifold and $F_1,\ldots,F_k$ a family of functions defined on $M^{2n}$: $F_j: M^{2n} \rightarrow \Re$, $j=1,\ldots,k$ such that $k<n$, which stay in involution on $M^{2n}$. Let the $k$-Hamiltonian systems $(M^{2n},\omega^2,F_j)$ with the correspond Hamiltonian flux of phases
${g^t_{F_j}}, t \in \Re; j=1,\ldots,k$. 
Since the functions $F_j, j=1,\ldots,k$ are in involution, the flux of phases commute, i.e.
$g^t_{F_i} o g^t_{F_j} = g^t_{F_j} o g^t_{F_i}$,  for all $t \in (-\infty, + \infty)$
and for all $i,j=1,\ldots,k$.

We assume that

\begin{itemize}
\item The  Hamiltonian flux phases  ${g^t_{F_j}}$, $j=1,\ldots,k$ are global, i.e. they are defined for all $t \in (-\infty, + \infty)$. It is valid for example if we assume that
the manifold $M^{2n}$ is compact.
\item The torus $\Lambda^k=S^1 \times, \ldots, \times S^1$ (where $S^1$ denotes the unit circle),
$k <n$ is a $k$-dimensional isotropic submanifold of $M^{2n}$ ($\Lambda^k \subset M^{2n}$) and is
invariant respect to the Hamiltonian system $(M^{2n},\omega^2,F_j), j=1,\ldots,k$.

\item The differential $dF_1,\ldots,dF_k$ are lineal independent at each point of the manifold $M^{2n}$.

\end{itemize}

For the construction of asymptotic solution of several partial differential equation in \cite{maslov2} the concept of Complex Germ on isotropic manifold is introduced. The issues
of existence and uniqueness of such object is treated in this work. The main difficulty is that
not always exist the Germ over any isotropic manifold.
\begin{definition}
A Complex Germ over the isotropic tori $\Lambda^k (k<n)$ is a smooth map on $\Lambda^k$,
$r^n: m \rightarrow r^n(m), \forall m \in \Lambda^k$, such that to each point $m \in \Lambda^k$ correspond a $n$-dimensional complex subspace $r^r(m)$ of the complexification 
of the tangent space to $M^{2n}$ at the point $m$ ($T_m(M^{2n})^{\mathbb{C}}$) with following properties:

i) $r^n(m)$ is a lagrangian subspace, i.e. $dim(r^n(m))=n$ and isotropic ($[x,y]=0, \forall x,y \in r^n(m)$),

ii) $r^n(m) \supset T_m(\Lambda^k)^\mathbb{C}$,

iii) $r^n(m)$ is disipative respect of $T_m(\Lambda^k)$, i.e.
\begin{equation}
\forall x \in r^n(m) \setminus T_m(\Lambda^k)^\mathbb{C} \quad \text{holds} \quad  (1/2i)[x,\bar{x}]>0,
\label{der2a}
\end{equation} 

iv) $r^n(m)$ is invariant respect the Hamiltonian flux ${g^t_H}, t \in \Re$ of a given function $H$,i.e. $\forall \in \Lambda^k, \forall t \in (-\infty,+\infty)$ holds
\begin{equation}
[(g^t_H)_{*,m}]^\mathbb{C}(r^n(m))=r(g^t_Hm),
\label{der1a}
\end{equation}  
where  $[(g^t_H)_{*,m}]^\mathbb{C}$ is the complexification of the derivative at point $m$
of an element $g^t_H$ of Hamiltonian flux phases $H$ associated to Hamilton function $H$. 
\label{def1}
 \end{definition}

\begin{remark}
From now we omit the supraindex $\mathbb{C}$ that indicated the complexification, in the space and operator. But implicitly we use the properties enunciated previously.
\end{remark}

The aim of this paper is to seek conditions for the existence and uniqueness of a Complex Germ on the a invariant torus to Hamiltonian system $(M^{2n},\omega^2,F_j),~j=1,\ldots,k; (k<n)$.

\section{Condition for the existence of the Complex Germ on the torus}
Let 
\begin{equation}
\Sigma=\{m \in M^{2n} : F_i(m)=f_i, i=1,\ldots,k, f=(f_1,\ldots,f_k) \in \Re^k  \},
\end{equation}
be the intersection of the level surfaces defined by the functions $F_j, j=1,\ldots,k$ which contain the trajectories of the Hamiltonian system  $\displaystyle x^{•}=JdF_j(x), j=1,\ldots,k$. Since $dF_1,\ldots,dF_k$ are linearly independent then $\Sigma$ is a submanifold submerge  in
$M^{2n}$ of dimension $2n-k$.

Let us define the action of additive group $\Re^k$ over $M^{2n}$, as follow:
to each $\overrightarrow{t}=(t_1,\ldots,t_k) \in \Re^k$ correspond the difemeorphism of $M^{2n}$ as:
\begin{equation}
g^{\overrightarrow{t}}=g_1^{t_1}o \ldots o g_k^{t_k},
\end{equation}
where $g_j^t=g_{F_j}^t, \forall j=1,\dots,k$. Since $g_j^t$ are simplectic diffemeorphism 
the $g^{\overrightarrow{t}}$ as well, i.e. $(g^{\overrightarrow{t}})^*\omega^2=\omega^2, \forall \overrightarrow{t} \in \Re^k $.
\subsection{Monodromy operator}
The condition for existence of the Complex Germ are given in term of the monodromy operator which we defined in the following paragraph

Let us fix the point $m \in \Lambda^k$. Denoting by $G$ the discrete subgroup of $\Re^k$ defined as follow
\begin{equation}
G=\{\overrightarrow{t} \in \Re^k : g^{\overrightarrow{t}}m=m\},
\label{mono1a}
\end{equation} 
which does not depend on the choice of the point $m$. The subgroup $G$ can be generated by a set
of $k$ elements linearly independent(see \cite{arnold1}), i.e. 
\begin{equation}
G=\{l_1T_1+\ldots,l_kT_k;~ l_i~ \in \Re;~~ T_i \in G~~ \text{are linearly independent},~~ i=1,\ldots,k\}.
\end{equation} 
Since $\Lambda^k$ is invariant respect to Hamiltonian system $x^{\prime}=JdF_j(x);j=1,\ldots,k$ we have $g_j^t\Lambda^k=\Lambda^k$ therefore $g^{\overrightarrow{t}}\Lambda^k=\Lambda^k$.
We have also that the map $f:\Re^k \rightarrow M^{2n}$ given by $f(t)=g^{\overrightarrow{t}}m$
which to any $\overrightarrow{t}$ correspond a point in $\Lambda^k$ is sobrejective, but is not injective because $\Lambda^k$ is compact and $\Re^k$ is not; therefore there exist $t_1,t_2 \in \Re^k; t_1 \neq t_2$, such that $g^{t_1}m=g^{t_2}m$ and $g^{t_1-t_2}m=m$ with $t_1-t_2 \in G$;
which mean that the subgroup $G$ is not trivial.
\begin{definition}
Let $T \in G$. The operator $G_m=(g^T_{*,m}):T_m(M^{2n}) \rightarrow T_m(M^{2n})$ is called
monodromy operator of the subgroup $G$ at point $m \in \Lambda^k$.
\label{mono}
\end{definition}
Let us rewrite the monodromy operator in a way more amassing to describe the sufficient condition for the existence of the Complex Germ.
We gave
\begin{equation}
g^T_{*,m}=g_{*,m}^{l_1T_1+\ldots+l_kT_k},
\end{equation}
for certain $l=(l_1,\ldots,l_k) \in Z^k$, where $T_1,\ldots,T_k$ are the generator of
subgroup $G$. Let denote $T_j=(t_{j1},\ldots,t_{jk}); t_{ij} \Re; \forall i,j=1,\ldots,k$.
thus we obtain
\begin{equation}
g_{*,m}^{l_1T_1+\ldots+l_kT_k}=g_{1*,m}^{l_1t_{11}+\ldots+l_kt_{k1}}o \ldots og_{k*,m}^{l_1t_{1k}+\ldots+l_kT_{kk}}.
\label{ant1}
\end{equation}
Using that $g_j^{t+s}=g_j^tog_j^s$ and $g_j^tog_i^t=g_i^tog_j^t$ and reordering \eqref{ant1} we obtain 
\begin{equation}
g^T_{*,m}=(g_{1*,m}^{l_1t_{11}}+\ldots+g_{k*,m}^{l_1t_{1k}})o(g_{1*,m}^{l_2t_{21}}+\ldots+g_{k*,m}^{l_2t_{2k}})o\ldots
o(g_{1*,m}^{l_kt_{k1}}+\ldots+g_{1*,m}^{l_kt_{kk}}),
\end{equation} 
or
\begin{equation}
G_m=G_1^{l_1}o \ldots o G_k^{l_k},
\label{princ}
\end{equation}
where $G_j=g_{*,m}^{T_j}: T_m(M^{2n}) \rightarrow T_m(M^{2n}), j=1,\ldots,k$ is called monodromy operator with period $T_j$.
The following is valid 
\begin{lemma}
Over the torus $\Lambda_k$ is possible to defined $k$ vectorial fields linearly independent.
\end{lemma}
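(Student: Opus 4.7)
The plan is to exhibit an explicit frame on $\Lambda^k$ using the $k$ Hamiltonian vector fields associated with the functions $F_1,\ldots,F_k$ in involution, and then reduce linear independence of the vector fields to the assumed linear independence of their differentials.

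First, for each $j=1,\ldots,k$, consider the Hamiltonian vector field $X_j=JdF_j$ on $M^{2n}$. I would show these are tangent to $\Lambda^k$ at every point $m\in\Lambda^k$. This follows from the assumed invariance of $\Lambda^k$ under the Hamiltonian flow $g^t_{F_j}$: since $g^t_{F_j}(\Lambda^k)=\Lambda^k$, differentiating the curve $t\mapsto g^t_{F_j}m$ at $t=0$ yields $X_j(m)\in T_m\Lambda^k$. Thus $X_1,\ldots,X_k$ are genuine vector fields on the torus $\Lambda^k$.

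Next, I would prove pointwise linear independence. Suppose that $\sum_{j=1}^k c_j X_j(m)=0$ for some scalars $c_j\in\Re$. Since $X_j(m)=JdF_j(m)$ and $J:T_m^{*}M^{2n}\to T_mM^{2n}$ is a linear isomorphism by Definition \ref{poi2}, this implies
\begin{equation}
J\Bigl(\sum_{j=1}^k c_j\,dF_j(m)\Bigr)=0 \quad\Longrightarrow\quad \sum_{j=1}^k c_j\,dF_j(m)=0.
\end{equation}
By hypothesis, the differentials $dF_1,\ldots,dF_k$ are linearly independent at every point of $M^{2n}$, hence in particular at $m\in\Lambda^k$. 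Therefore $c_1=\cdots=c_k=0$, which establishes linear independence of $X_1,\ldots,X_k$ at every point of $\Lambda^k$.

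Since $\dim\Lambda^k=k$, the $k$ linearly independent vector fields $X_1,\ldots,X_k$ in fact form a global frame of the tangent bundle $T\Lambda^k$, which gives the claim (and incidentally confirms that $\Lambda^k$ is parallelizable, consistent with its being a torus). There is essentially no obstacle here: the only subtlety is to verify tangency, and that comes for free from the invariance assumption together with the fact that the Hamiltonian vector fields $JdF_j$ are precisely the infinitesimal generators of the flows $g^t_{F_j}$ whose orbits stay in $\Lambda^k$.
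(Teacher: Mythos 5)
Your proposal is correct and follows essentially the same route as the paper: it takes the Hamiltonian vector fields $JdF_j$, uses the invariance of $\Lambda^k$ to obtain tangency, and reduces their pointwise linear independence to the assumed linear independence of $dF_1,\ldots,dF_k$ via the invertibility of $J$. You merely spell out the two steps (differentiating the orbit at $t=0$, and the explicit linear-combination argument) in more detail than the paper does.
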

\begin{proof}
Since $\Lambda^k$ is invariant respect to the $k$- Hamiltonian system
$\dot{x}=JdfF_{j}(x)$,\\$j=1,\ldots,k$, $JdF_{j}(m) \in T_m(\Lambda^k), \forall m \in \Lambda^k$. Also, since $dF_1,\ldots,dF_k$ are linearly independent at each 
point $m \in \Lambda^k$ and the operator $J$ is regular we obtain
that the vector $JdF_j(m),j=1,\ldots,k$ are linearly in each point
of $m \in \Lambda^k$.
\end{proof}

Note that $JdF_j,j=1,\dots,k$ in each point $m \in \Lambda^k$ constitute a basis of the tangent space $T_m(\Lambda^k)$ of $\Lambda^k$. Using the $\Lambda^k$ is invariant we obtain
\begin{equation}
G_j(JdF_i)=\displaystyle \sum_{n=1}^{k}\beta_nJdF_n,
\label{erq1}
\end{equation}
 \begin{equation}
G_m(JdF_i)=\displaystyle \sum_{n=1}^{k}\mu_nJdF_n,
\label{erq2}
\end{equation}
with $\beta_n,\mu_n \in \mathbb{C};n=1,\ldots,k$.
Let $\Gamma_m=T_m(\Sigma) \diagup T_m(\Lambda^k)$; we have $dim(\Gamma_m)=2(n-k)$.

It is valid the following
\begin{lemma}
Let $[\theta] \in \Gamma_m$ then $G_j([\theta])=[G_j(\theta)]$ and $G_m([\theta])=[G_m(\theta)]$. 
\label{lem13}
\end{lemma}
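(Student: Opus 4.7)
The plan is to show that the operators $G_j$ and $G_m$ each preserve the subspaces $T_m(\Sigma)$ and $T_m(\Lambda^k)$, so that they descend to well-defined linear maps on the quotient $\Gamma_m=T_m(\Sigma)/T_m(\Lambda^k)$. Once both invariances are established, the desired identities $G_j([\theta])=[G_j(\theta)]$ and $G_m([\theta])=[G_m(\theta)]$ are immediate from linearity of $G_j, G_m$ combined with the definition of the quotient.

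First I would argue that $g^{T_j}$ preserves the submanifold $\Sigma$. Because the Hamilton functions $F_1,\ldots,F_k$ are in involution, each $F_i$ is a first integral of the Hamiltonian flow $g^t_{F_r}$ for every $r$, so $F_i\circ g^t_{F_r}=F_i$. Hence each composition $g^{\overrightarrow{t}}=g^{t_1}_{F_1}\circ\ldots\circ g^{t_k}_{F_k}$ leaves every level set $\{F_i=f_i\}$ invariant, and so $g^{T_j}(\Sigma)=\Sigma$. Since $T_j\in G$ we have $g^{T_j}m=m$, so the derivative at $m$ gives $G_j=(g^{T_j})_{*,m}:T_m(\Sigma)\to T_m(\Sigma)$. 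By the assumed invariance of $\Lambda^k$ under each $g^t_{F_r}$, the same composition also satisfies $g^{T_j}(\Lambda^k)=\Lambda^k$, and differentiating yields $G_j(T_m(\Lambda^k))\subset T_m(\Lambda^k)$.

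With both invariances in hand, define $G_j[\theta]:=[G_j(\theta)]$. Well-definedness is the only thing to check: if $\theta_1-\theta_2\in T_m(\Lambda^k)$, then by linearity $G_j(\theta_1)-G_j(\theta_2)=G_j(\theta_1-\theta_2)\in T_m(\Lambda^k)$, so $[G_j(\theta_1)]=[G_j(\theta_2)]$. The statement for $G_m$ then follows without further work from the factorization \eqref{princ}, $G_m=G_1^{l_1}\circ\ldots\circ G_k^{l_k}$, which exhibits $G_m$ as a composition of operators that already descend to $\Gamma_m$; equivalently, one can directly repeat the argument using $g^{T}$ with $T\in G$ in place of $g^{T_j}$.

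I do not anticipate a genuine obstacle here: the lemma is essentially a bookkeeping consequence of two facts already fixed in the set-up, namely that the Poisson commutativity of the $F_i$ makes $\Sigma$ flow-invariant, and that $\Lambda^k$ is invariant by hypothesis. The only place a little care is needed is in ensuring that the derivative of $g^{T_j}$ at $m$ actually maps $T_m(\Sigma)$ and $T_m(\Lambda^k)$ to themselves (and not merely to the corresponding tangent spaces at $g^{T_j}m$); this is what the condition $T_j\in G$, i.e. $g^{T_j}m=m$, provides.
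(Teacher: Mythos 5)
Your proof is correct and follows essentially the same route as the paper: both arguments reduce the lemma to the fact that $G_j$ preserves $T_m(\Lambda^k)$ (the paper expresses this via equation \eqref{erq1} in the basis $JdF_1,\ldots,JdF_k$, you via invariance of $\Lambda^k$ under the flows), after which well-definedness on the quotient is immediate from linearity and the factorization \eqref{princ}. Your explicit check that $g^{T_j}$ also preserves $\Sigma$, and your remark that $g^{T_j}m=m$ is what makes $G_j$ an endomorphism of $T_m(\Sigma)$, are points the paper leaves implicit.
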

\begin{proof}
Let $[\theta]=\{ \theta : \theta^{'} \equiv \theta mod(T_m(\Lambda^k))\}$,
$\theta^{'} \equiv \theta mod(T_m(\Lambda^k))$ if and only if $\theta^{'}=\theta+\displaystyle \sum_{i=1}^{k}\beta_i JdF_i$, where $\beta_i \in \mathbb{C},i=1,\ldots,k$,
Also the following equality is valid 
\begin{equation}
G_j(\theta^{'})=G_j(\theta)+ \displaystyle \sum_{i=1}^{k}\mu_iJdF_i,
\end{equation}
and by using equality \eqref{erq1} we have $G_j(\theta^{'})=G_j(\theta)+\sum_{i=1}^{k} \mu_i JdF_i$ for certain $\mu_i \in \mathbb{C}, i=1,\ldots,k$ and due to \eqref{princ} also satisfy
that
\begin{equation}
G_m(\theta^{'})=G_m(\theta)+ \displaystyle \sum_{i=1}^{k} \tau_i JdF_i,
\end{equation}
\begin{equation}
\Xi_m: \Gamma_m \rightarrow \Gamma_m,  \quad \text{such that} \quad [\theta] \rightarrow [G_m(\theta)],
\end{equation}
\begin{equation}
\Xi_j: \Gamma_j \rightarrow \Gamma_j,  \quad \text{such that} \quad [\theta] \rightarrow [G_j(\theta)],
\end{equation}
with $j=1,\ldots,k$.
From now, the structure $[,]$ must be defined between equivalence class modulo $T_m(\Lambda)$
on each point $m\ \in \Lambda^k$. To do so, we need to prove that $[,]$ is compatible with respect to equivalence class i.e. if $\theta \equiv \sigma mod(T_m(\Lambda^k))$ and
$\theta^{'} \equiv \sigma^{'} mod(T_m(\Lambda^k))$, then $[\theta,\theta^{'}]=[\sigma,\sigma^{'}]$. We assume by definition that $[[\theta],[\theta]]=[\theta,\theta]$.
We prove that  $[\theta,\theta^{'}]=[\sigma,\sigma^{'}]$:
\begin{equation}
\theta=\sigma+ \displaystyle \sum_{i=1}^{k}\alpha_i JdF_i, \quad \theta^{'}=\sigma^{'}+ \displaystyle \sum_{i=1}^{k}\delta_i JdF_i.
\end{equation}
By using bi-linearity property we have
\begin{equation*}
[\theta,\theta^{'}]=[\displaystyle \sum_{i=1}^{k}\alpha_i JdF_i,\displaystyle \sum_{i=1}^{k}\delta_i JdF_i],
\end{equation*}
or
\begin{eqnarray}
[\theta,\theta^{'}]=[\sigma,\sigma^{'}]+[\displaystyle \sum_{i=1}^{k}\alpha_i JdF_i,\displaystyle \sum_{i=1}^{k}\delta_i JdF_i],\\
+[\sigma,\displaystyle \sum_{i=1}^{k}\delta_i JdF_i]+[\displaystyle \sum_{i=1}^{k}\alpha_i JdF_i,\sigma^{'}].
\end{eqnarray}
We have
\begin{equation}
[\sigma,\displaystyle \sum_{i=1}^{k}\delta_i JdF_i]=\displaystyle \sum_{i=1}^{k}\delta_i [\sigma,JdF_i]=\displaystyle \sum_{i=1}^{k}\delta_i JdF_i(\sigma)=0,
\end{equation}
because $JdF_i(\sigma)=0$ since $\sigma \in T_m(\Sigma)$. Analogously is prove that $[\displaystyle \sum_{i=1}^{k}\alpha_i JdF_i,\sigma^{'}]=0$.  Also that $[\displaystyle \sum_{i=1}^{k}\alpha_i JdF_i,\displaystyle \sum_{i=1}^{k}\delta_i JdF_i]=0$   holds , since the functions $F_i, i=1,\ldots,k$ stay in involution. Finally we have the proof.

\end{proof}

\begin{definition}
The operator $\Xi_m: \Gamma_m \rightarrow \Gamma_m$ is called reduced monodromy operator at point $m$. i.e.,
\begin{equation}
\Xi_m=(\Xi_1)^{l_1} o \ldots o (\Xi_k)^{l_k},
\label{redu1}
\end{equation}
where the operators $\Xi_j$, $j=1,\ldots,k$ are called
 reduced monodromy operator with period $T_j$. 
 \label{redu}
\end{definition}
Also the following proposition is valid 
\begin{proposition}
	The quotient space $\Gamma_m=T_m(\Sigma)/T_m(\Lambda^k)$ has a natural symplectic structure such that
	the operators  $\Xi_j$, $j=1,\ldots,k$ are symplectic respect to the structure $[,]$ of the space
	$T_m(M^{2n})$ induce over this space.
\end{proposition}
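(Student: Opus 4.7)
My plan is to set this up as a standard symplectic reduction at the point $m$. The key identity to establish first is that $T_m(\Lambda^k)$ is exactly the $\omega^2$-skew-orthogonal of $T_m(\Sigma)$ inside $T_m(M^{2n})$. By Definition \ref{poi2}, for any $v \in T_m(\Sigma)$ we have $\omega^2(v, JdF_i) = dF_i(v) = 0$, so the $k$ vectors $JdF_1(m), \ldots, JdF_k(m)$ lie in $T_m(\Sigma)^{\perp_\omega}$; since these vectors form a basis of $T_m(\Lambda^k)$ by the earlier lemma, we get $T_m(\Lambda^k) \subset T_m(\Sigma)^{\perp_\omega}$. A dimension count using non-degeneracy of $\omega^2$ gives $\dim T_m(\Sigma)^{\perp_\omega} = 2n - (2n-k) = k$, so equality holds. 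In particular $T_m(\Lambda^k) \subset T_m(\Sigma)$, so the quotient $\Gamma_m$ makes sense.

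Next I define the candidate symplectic form on $\Gamma_m$ by $[[\theta],[\theta']] := \omega^2(\theta,\theta')$ for representatives $\theta,\theta' \in T_m(\Sigma)$. Well-definedness is exactly what was verified at the end of the proof of Lemma \ref{lem13}: changing $\theta$ by an element of $T_m(\Lambda^k)$ adds a term of the form $\omega^2(JdF_i,\theta') = -dF_i(\theta')$, which vanishes since $\theta' \in T_m(\Sigma)$, and analogously for the other slot; bilinearity and antisymmetry are inherited from $\omega^2$. Non-degeneracy follows from the identity of the first paragraph: if $[\theta] \neq 0$ then $\theta \in T_m(\Sigma) \setminus T_m(\Lambda^k) = T_m(\Sigma) \setminus T_m(\Sigma)^{\perp_\omega}$, so there is some $\theta' \in T_m(\Sigma)$ with $\omega^2(\theta,\theta') \neq 0$, i.e.\ $[[\theta],[\theta']] \neq 0$.

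For the second assertion I first observe that each $G_j = g^{T_j}_{*,m}$ preserves both subspaces $T_m(\Sigma)$ and $T_m(\Lambda^k)$: since $F_i$ and $F_j$ are in involution, the flow $g^t_{F_j}$ preserves every level set of $F_i$ and hence preserves $\Sigma$; and $\Lambda^k$ is invariant by hypothesis. Combined with $g^{T_j}m = m$ this shows $G_j$ restricts to an automorphism of $T_m(\Sigma)$ carrying $T_m(\Lambda^k)$ to itself, so it descends to $\Xi_j$ on $\Gamma_m$. Because each $g^t_{F_j}$ is a symplectic diffeomorphism, $(g^{T_j})^*\omega^2 = \omega^2$, i.e.\ $\omega^2(G_j\theta,G_j\theta') = \omega^2(\theta,\theta')$. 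Passing to the quotient gives $[[\Xi_j[\theta],\Xi_j[\theta']]] = [[\theta],[\theta']]$, which is precisely the statement that $\Xi_j$ is symplectic.

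I do not expect a serious obstacle in any of the steps; the only spot that needs real attention is the identification $T_m(\Lambda^k) = T_m(\Sigma)^{\perp_\omega}$, since both non-degeneracy of the induced form and invariance of $T_m(\Sigma)$ under $G_j$ rely on combining the involution hypothesis on the $F_i$'s with the isotropy of $\Lambda^k$ and the dimension assumption $k < n$.
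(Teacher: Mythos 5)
Your proposal is correct and follows essentially the same route as the paper: both rest on the identification $T_m(\Lambda^k)=T_m(\Sigma)^{\perp}$ via the computation $[x,JdF_i]=dF_i(x)=0$ on $T_m(\Sigma)$ together with the dimension count $\dim T_m(\Sigma)^{\perp}=k$, and then deduce that $\Xi_j$ is symplectic from the symplecticity of $G_j=g^{T_j}_{*,m}$ passed to the quotient. The only difference is cosmetic: the paper invokes the general fact that a degenerate antisymmetric form induces a non-degenerate one on $V/V^{\perp}$, while you verify non-degeneracy and well-definedness of the induced form directly, which is a slightly more self-contained presentation of the same argument.
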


\begin{proof}
	It is known that if the vectorial space $V$ is given a bilinear, antisymmetric, degenerate form then over the quotient
	vectorial space $V/V^\perp$ is induced a bilinear, antisymmetric, no degenerate form. Taking $V=T_m(\Sigma)$
	and the bilinear form $[,]$ which is degenerate over $T_m(\Sigma)$ and we use that $T_m(\Sigma)^\perp=T_m(\Lambda^k)$.
Let $\sigma=\sum_{i=1}^{k}\sigma_i JdF_i$, which $\sigma_i \neq 0$ for some $i=1,\ldots,k$. Since $\sigma \in T_m(\Lambda^k)$ holds that $[x,\sigma]=0$,~ $\forall x \in T_m(\Sigma)$ because $[x,IJdF_i]=dF_i(x)=0$ ($F_i$ is constant on $\Sigma$). This proved that $[,]$ is degenerate on $T_m(\Sigma)$ and $JdF_i \in T_m(\Sigma)$. Since
 $\dim(T_m(\Sigma))=2n-k$ then  $\dim(T_m(\Sigma)^\bot)=k$ and $IJdF_i$ ($i=1,\ldots,k$) constitutes a basis of 
 $T_m(\Lambda^k)$. Since $T_m(\Sigma)^\bot=T_m(\Lambda^k)$ we have that $\Gamma_m$ is a subspace lineal symplectic.
	Now, we verify that the operator $\Xi_j$ ($j=1,\ldots,k$) are symplectic. Let $[\theta], [\theta^{'}] \in \Gamma_m$.
	we have 
	\begin{equation*}
	[\Xi_j([\theta]),\Xi_j([\theta^{'}])]=	[G_j([\theta]),G_j([\theta^{'}])],
	\end{equation*}
where $G_j$, with $j=1,\ldots$ are the monodromy operators in Definition \ref{mono}. From Lemma \ref{lem13} we have
	\begin{equation*}
	[\Xi_j([\theta]),\Xi_j([\theta^{'}])]=	[[G_j(\theta)],[G_j([\theta^{'})]],
	\end{equation*}
By definition of the product of two class we obtain
	\begin{equation*}
	[\Xi_j([\theta]),\Xi_j([\theta^{'}])]=	[G_j(\theta),G_j(\theta^{'})].
	\end{equation*}
Since $G_j$ are symplectic we have $[\Xi_j([\theta]),\Xi_j([\theta^{'}])]=[\theta,\theta^{'}]$. Using again
the definition of the product of two class we have $[\Xi_j([\theta]),\Xi_j([\theta^{'}])]=[[\theta],[\theta^{'}]]$; so
$\Xi_j$, with $j=1,\ldots,k$ are symplectic.	
\end{proof}

\begin{definition}
	Two symplectic lineal operator $A_i: L^1 \rightarrow L^2$, with $i=1,2$. are called equivalent if there exist a lineal symplectic
	$\tau: L^1 \rightarrow L^2$ such that
	$A_2=\tau A_2 \tau^{-1}$.
\end{definition}
For the purposes of this work we need to verify that the reduced monodromy operator does not depends on the point $m \in T_m(\Lambda^k)$. They are precisely these operators
which will serve to establish sufficient conditions for the existence of the complex germ. Notice $\Xi_j$, with $j=1,\ldots,k$ are symplectic operators then by Definition \ref{redu}, operator $\Xi_m$ in \eqref{redu1} is symplectic as well.

It is valid the following
\begin{proposition}
	Let $m,m^{'} \in \Lambda^k$ then operators  $\Xi_m$ and  $\Xi_{m^{'}}$ defined in \eqref{redu1} are equivalent.
\end{proposition}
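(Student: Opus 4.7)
The plan is to use the transitivity of the combined Hamiltonian flow on $\Lambda^k$ to produce a canonical symplectic isomorphism $T_m(M^{2n}) \to T_{m'}(M^{2n})$ that descends to the quotient $\Gamma_m$ and intertwines the reduced monodromy operators.

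First, since the map $\vec{t} \mapsto g^{\vec{t}} m$ from $\mathbb{R}^k$ to $\Lambda^k$ is surjective (as already noted in the paragraph following Definition \ref{mono}), there exists $\vec{s} \in \mathbb{R}^k$ with $m' = g^{\vec{s}} m$. I would then set
\begin{equation*}
\tau := (g^{\vec{s}})_{*,m} : T_m(M^{2n}) \longrightarrow T_{m'}(M^{2n}),
\end{equation*}
which is a linear symplectic isomorphism because $g^{\vec{s}}$ is a symplectic diffeomorphism.

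Second, I would check that $\tau$ respects the two subspaces used to build the quotient $\Gamma_m$. Since the $F_i$ are in involution, each $g_j^t$ preserves the level set $\Sigma$, and hence so does $g^{\vec{s}}$; therefore $\tau(T_m\Sigma)=T_{m'}\Sigma$. Since $\Lambda^k$ is invariant under every $g_j^t$, it is invariant under $g^{\vec{s}}$, so $\tau(T_m\Lambda^k)=T_{m'}\Lambda^k$. Consequently $\tau$ descends to a well-defined linear symplectic isomorphism
\begin{equation*}
\bar\tau : \Gamma_m \longrightarrow \Gamma_{m'}, \qquad [\theta] \longmapsto [\tau(\theta)].
\end{equation*}

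Third, I would establish the intertwining $G_{m'} \circ \tau = \tau \circ G_m$ by differentiating the commutation $g^T \circ g^{\vec{s}} = g^{\vec{s}} \circ g^T$ (valid because the Hamiltonian flows generated by functions in involution commute) at the point $m$, and using $g^T m = m$ together with the fact that the subgroup $G$ in \eqref{mono1a} does not depend on the base point, so the same generators $T_1,\dots,T_k$ (and hence the same exponents $l_1,\dots,l_k$) are used to write both $G_m$ and $G_{m'}$ via \eqref{princ}. The chain rule gives
\begin{equation*}
(g^T)_{*,m'}\circ (g^{\vec{s}})_{*,m} = (g^{\vec{s}})_{*,m}\circ (g^T)_{*,m},
\end{equation*}
which is exactly $G_{m'}\circ\tau = \tau\circ G_m$. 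Passing to the quotient by Lemma \ref{lem13} yields $\Xi_{m'}\circ \bar\tau = \bar\tau\circ\Xi_m$, i.e.\ $\Xi_{m'}=\bar\tau\,\Xi_m\,\bar\tau^{-1}$, which is the required equivalence.

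The only delicate point I foresee is ensuring that the same period lattice generators $T_1,\dots,T_k$ (and thus the same integer multiplicities $l_1,\dots,l_k$ in \eqref{redu1}) are used for $\Xi_m$ and $\Xi_{m'}$; this is guaranteed by the base-point independence of $G$ stated after \eqref{mono1a}, but deserves explicit mention so that the intertwining on each factor $\Xi_j$ propagates to the full composition defining $\Xi_m$.
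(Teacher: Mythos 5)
Your proposal is correct and follows essentially the same route as the paper: both conjugate by the derivative $(g^{\vec{s}})_{*,m}$ of the group element carrying $m$ to $m'$, check that it preserves $T_m\Sigma$ and $T_m\Lambda^k$ so that it descends to $\Gamma_m$, and then verify the intertwining relation. The only difference is that you explicitly carry out the intertwining computation (via commutativity of the flows and base-point independence of $G$), which the paper leaves as ``we can check''.
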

\begin{proof}
	There exist $\overrightarrow{t} \in \Re^k$ such that $m=g^{\overrightarrow{t}}m^{'}$. 
	Let the operator $G_{m,m^{'}}=(g^{\overrightarrow{t}})_{*,m}: T_m(M^{2m} \rightarrow T_{m^{'}}(M^{2m})$, which
	is symplectic (see Section \ref{Chapter2}  ). Also $G_{m,m^{'}}(T_m(\Sigma))=T_{m^{'}}(\Sigma)$ and $G_{m,m^{'}}(T_m(\Lambda^k))=T_m(\Lambda^k)$ hold. Then we define the operator $\tau_{m,m^{'}}: \Gamma_m \rightarrow \Gamma_{m^{'}}$ by $\tau_{m,m^{'}}([\theta])=[G_{m,m^{'}}(\theta)]$. We can check that
	$\Xi_{m^{'}}\tau_{m,m^{'}}=\tau_{m,m^{'}}\Xi_m$.
\end{proof}

\begin{remark}
	Analogously it can prove that the operators $\Xi_j$, with $j=1,\ldots,k$ are equivalents at different points
	in the torus $\Lambda^k$.
\end{remark}

Let $\Pi : T_m(\Sigma) \rightarrow \Gamma_m$ the canonical map that each $\theta \in T_m(\Sigma)$ correspond its equivalence class $[\theta]$ modulo $T_m(\Lambda^k)$. 

We have that is valid

\begin{proposition}
	It is valid that
	\begin{equation}
  \Xi_j o \Pi = \Pi o G_j.
  \label{rel1}
	\end{equation}
\end{proposition}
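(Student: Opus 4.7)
The plan is to verify the identity by a direct unwinding of definitions, once we have made sure that both compositions act between the appropriate spaces. The key preliminary observation is that $G_j = (g^{T_j})_{*,m}$ restricts to a linear automorphism of $T_m(\Sigma)$, so that $\Pi \circ G_j$ is well-defined on the same domain as $\Xi_j \circ \Pi$. Given this, the equality of the two compositions will follow because $\Xi_j$ was constructed precisely as the map induced by $G_j$ on the quotient $\Gamma_m = T_m(\Sigma)/T_m(\Lambda^k)$.

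To establish the preliminary preservation statement, I would first recall that for any $i,\ell \in \{1,\ldots,k\}$ the functions $F_i$ and $F_\ell$ are in involution, which means $F_i$ is constant along trajectories of the Hamiltonian flow $g_\ell^t$. Consequently each level surface $\{F_i = f_i\}$ is invariant under every $g_\ell^t$, and hence so is their intersection $\Sigma$. In particular $g^{T_j}(\Sigma)=\Sigma$, and since $g^{T_j}m=m$ by definition of $T_j \in G$, the derivative $G_j=(g^{T_j})_{*,m}$ carries $T_m(\Sigma)$ into itself. The analogous invariance of $T_m(\Lambda^k)$, which is needed to descend $G_j$ to $\Gamma_m$, was already used when introducing $\Xi_j$ and is the content of Lemma \ref{lem13}.

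With these two invariances in place, the verification is immediate: for $\theta \in T_m(\Sigma)$ we compute $(\Xi_j \circ \Pi)(\theta) = \Xi_j([\theta]) = [G_j(\theta)]$ by the defining formula of $\Xi_j$, while $(\Pi \circ G_j)(\theta) = \Pi(G_j(\theta)) = [G_j(\theta)]$ by the definition of $\Pi$ as the canonical projection onto equivalence classes modulo $T_m(\Lambda^k)$. Both expressions coincide, which is \eqref{rel1}. I do not anticipate any real obstacle here: the statement is essentially the universal property of the quotient applied to $G_j$, and all the substantive work (that $G_j$ preserves $T_m(\Sigma)$ and $T_m(\Lambda^k)$, and therefore descends) has already been done in Lemma \ref{lem13} and in the discussion preceding Definition \ref{redu}.
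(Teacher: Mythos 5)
Your proposal is correct and follows the same route as the paper: both unwind the definitions to show $(\Xi_j \circ \Pi)(\theta)=[G_j(\theta)]=(\Pi \circ G_j)(\theta)$. The extra preliminary remark you include, that $G_j$ preserves $T_m(\Sigma)$ and $T_m(\Lambda^k)$ so that both compositions are well-defined, is a sensible addition that the paper leaves implicit (relying on Lemma \ref{lem13}), but it does not change the argument.
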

\begin{proof}
	Let $\theta \in T_m(\Sigma)$; $\Pi(G_j(\theta))=[G_j(\theta)]$. Besides that $\Xi_j(\Pi(\theta))=\Xi_j([\theta])=[G_j(\theta)]$, being proved the proposition.
\end{proof}

\begin{definition}
	A complex linear subspace $R$ $\subset$  $\mathbb{C}^n$ is called positive if~~ $\forall x \in R; x \neq 0$ 
	$(1/2i)[x,\bar{x}] > 0$ holds. It subspace is called negative if 	$(1/2i)[x,\bar{x}] < 0$.
\end{definition}

\begin{lemma}
	Let $R \subset \Gamma_m$ a lineal, positive, lagrangian and invariant respect to the operators
	$\Xi_i$, $i=1,\ldots,k$. then the subspace $r^n=\Pi^{-1}(R)$ is dissipative respect to $T_m(\Lambda^k)$, lagrangian and invariant respect of the operator 	$G_i$, $i=1,\ldots,k$, where
	$\Pi$ is the canonical map.
	\label{lemd1}
\end{lemma}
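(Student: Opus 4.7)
\textbf{Proof proposal for Lemma \ref{lemd1}.} The plan is to transfer each of the three properties (lagrangian, dissipative, invariant) from $R$ up to $r^n$ through the canonical projection $\Pi$, exploiting that (i) the symplectic form on $\Gamma_m$ is the one induced by $[\,,\,]$ from $T_m(\Sigma)$, and (ii) the intertwining relation $\Xi_i\circ\Pi=\Pi\circ G_i$ proved just before.

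First I would pin down dimensions and the inclusion in (ii) of Definition \ref{def1}. Since $R$ is lagrangian in the $2(n-k)$-dimensional symplectic space $\Gamma_m$, $\dim_{\mathbb{C}} R=n-k$. The kernel of $\Pi$ is $T_m(\Lambda^k)^{\mathbb{C}}$ of dimension $k$, so $\dim_{\mathbb{C}}r^n=\dim_{\mathbb{C}}\Pi^{-1}(R)=(n-k)+k=n$. Moreover $T_m(\Lambda^k)^{\mathbb{C}}=\Pi^{-1}(0)\subset\Pi^{-1}(R)=r^n$, which is precisely the containment needed for the dissipativity condition \eqref{der2a} to be meaningful.

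Next, for the lagrangian property, given $x,y\in r^n\subset T_m(\Sigma)$ one has $[x],[y]\in R$. Using the well-definedness of the form on the quotient (established in the preceding proposition), $[x,y]=[[x],[y]]=0$, since $R$ is isotropic; combined with $\dim_{\mathbb{C}}r^n=n$ this gives lagrangianity. For dissipativity, take $x\in r^n\setminus T_m(\Lambda^k)^{\mathbb{C}}$, so that $[x]\neq 0$ in $\Gamma_m$. Since $\Pi$ is the complexification of a real linear map it commutes with complex conjugation, so $\overline{[x]}=[\bar x]$; invoking positivity of $R$ and compatibility of the forms, $(1/2i)[x,\bar x]=(1/2i)[[x],[\bar x]]=(1/2i)[[x],\overline{[x]}]>0$, as required.

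Finally, invariance under each $G_i$ follows from the intertwining relation \eqref{rel1}: if $x\in r^n=\Pi^{-1}(R)$, then $G_i(x)\in T_m(\Sigma)$ (the flow preserves $\Sigma$), and $\Pi(G_i(x))=\Xi_i(\Pi(x))=\Xi_i([x])\in\Xi_i(R)=R$, so $G_i(x)\in\Pi^{-1}(R)=r^n$; invertibility of $G_i$ yields $G_i(r^n)=r^n$. The main subtlety is the conjugation step in the dissipativity argument — one must be careful that $\overline{[x]}=[\bar x]$, i.e.\ that $T_m(\Lambda^k)^{\mathbb{C}}$ is stable under conjugation (which holds because it is the complexification of a \emph{real} subspace). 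Once this is granted, everything reduces to applying the already-proved compatibility of $[\,,\,]$ with the quotient.
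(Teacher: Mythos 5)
Your proposal is correct and follows essentially the same route as the paper's own proof: isotropy via the compatibility of $[\,,\,]$ with the quotient, lagrangianity via the dimension count $\dim(\ker\Pi)+\dim R=k+(n-k)=n$, dissipativity via $\overline{[x]}=[\bar x]$ and the positivity of $R$, and invariance via the intertwining relation \eqref{rel1}. Your explicit spelling-out of the invariance step and the remark on conjugation-stability of $T_m(\Lambda^k)^{\mathbb{C}}$ are slightly more detailed than the paper's, but the argument is the same.
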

	\begin{proof}
		We have that $\Pi(x)=[x]mod T_m(\Lambda^k)=\{x+y: y \in T_m(\Lambda^k)\}$;\\
	
	a) Let us prove that $r^n=\Pi^{-1}(R)$ is isotropic. Let $x_1,x_2 \in \Pi^{-1}(R)$ then
	$\Pi(x_1), \Pi(x_2) \in R; [\Pi(x_1),\Pi(x_2)]=0$ for  being $r^n$ isotropic; by definition
	we have $[x_1,x_2]=[\Pi(x_1),\Pi(x_2)]=0$, therefore $r^n$ is isotropic.
	
	b) Let us prove that $r^n$ is lagrangian in $T_m(M^{2n}), i.e. dim (R)=(1/2)dim(\Gamma_m)=n-k,\Pi^{-1}(0)=T_m(\Lambda^k)$, then $dim(\Pi^{-1}(0))=k$, also since
	$\Pi^{-1}(0) \subset \Pi^{-1}(R)$, then  and $dim(r^n)=dim(Ker(\Pi))+dim(Im(\Pi^{-1}(R)))=k+n-k=n$, then $r^n$
	is lagrangian.
	
	c) Let us verify that $r^n$ is dissipative respect to $T_m(\Lambda^k)$: we have that $T_m(\Lambda^k) \subset r^n$. Let $x \in r^n \diagdown T_m(\Lambda^k)$ then  $\Pi(x)=[x]\neq 0$. Besides we have $[\bar{x}]=\bar
	{[x]}$, the dissipative condition of $r^n$ is a consequence of the positivity of $R$, i.e.
	\begin{equation*}
	(1/2i)[x,\bar{x}]=(1/2i)[[x],[\bar{x}]]=(1/2i)[[x],\bar{[x]}]=(1/2i)[\Pi(x),\Pi(x)]>0,
	\end{equation*}
	due to $R$ is positive.
	
	d) The invariance of $r^n$ respect of $G_i$ ($i=1,\ldots,k$) is a consequence of \eqref{rel1} and the invariance of $R$ respect to $\Xi_j$ ($i=1,\ldots,k$).
\end{proof}
It is valid the following 
\begin{theorem}
	Let $m \in \Lambda^k$ fixed. If there exist a lineal subspace $N$ lagrangian, dissipative respect to
	$T_m(\Lambda^k)$, invariant respect all the operators $G_i$ ($i=1,\ldots,k$) then the exist a complex germ respect to the Hamiltonian system $x^{'}=JdF_j(x)$ ($j=1,\ldots,k$).
	\label{thep}
\end{theorem}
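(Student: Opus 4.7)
The plan is to construct the germ on the whole torus $\Lambda^k$ by transporting the given subspace $N$ along the commuting flows generated by $F_1,\ldots,F_k$. Concretely, for any $m'\in\Lambda^k$ I would pick $\vec{t}\in\mathbb{R}^k$ with $g^{\vec{t}}m=m'$ (such $\vec{t}$ exists since the map $\vec{t}\mapsto g^{\vec{t}}m$ is surjective onto $\Lambda^k$, as recalled in the text) and set
\begin{equation}
r^n(m') = \bigl[(g^{\vec{t}})_{*,m}\bigr]^{\mathbb{C}}(N).
\end{equation}
At $\vec{t}=0$ this reproduces $r^n(m)=N$, so it agrees at the base point.

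The first step after making this definition is to verify \textbf{well-definedness}, which is where the hypothesis on $N$ is used. If $g^{\vec{t}_1}m = g^{\vec{t}_2}m$, then $\vec{t}_1-\vec{t}_2\in G$, so by the characterization of $G$ recalled in the text there are integers $l_1,\ldots,l_k$ with $\vec{t}_1-\vec{t}_2=l_1T_1+\cdots+l_kT_k$. The derivative at $m$ of the corresponding $g^{\vec{t}_1-\vec{t}_2}$ is then, by formula \eqref{princ}, the monodromy operator $G_m=G_1^{l_1}\circ\cdots\circ G_k^{l_k}$. Since by assumption $N$ is invariant under each $G_i$, it is invariant under $G_m$, and therefore
\begin{equation}
(g^{\vec{t}_1})_{*,m}^{\mathbb{C}}(N) = (g^{\vec{t}_2})_{*,m}^{\mathbb{C}}\circ (g^{\vec{t}_1-\vec{t}_2})_{*,m}^{\mathbb{C}}(N) = (g^{\vec{t}_2})_{*,m}^{\mathbb{C}}(N).
\end{equation}
This is the main technical point of the proof; once it is settled, everything else is a formal consequence of the symplectic and covariant nature of the construction.

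The second step is to check the four defining properties of the complex germ at every $m'=g^{\vec{t}}m$. Since $(g^{\vec{t}})_{*,m}$ is symplectic, its complexification preserves the complexified form $[\cdot,\cdot]$; applying this to $N$, which is lagrangian by hypothesis, yields that $r^n(m')$ is lagrangian, giving property (i). Property (ii), namely $r^n(m')\supset T_{m'}(\Lambda^k)^{\mathbb{C}}$, follows from the invariance of $\Lambda^k$ under $g^{\vec{t}}$, which gives $(g^{\vec{t}})_{*,m}(T_m(\Lambda^k))=T_{m'}(\Lambda^k)$, together with the containment $N\supset T_m(\Lambda^k)^{\mathbb{C}}$ implicit in the dissipativity assumption on $N$. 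For the dissipativity (iii), any $x\in r^n(m')\setminus T_{m'}(\Lambda^k)^{\mathbb{C}}$ is the image of some $y\in N\setminus T_m(\Lambda^k)^{\mathbb{C}}$ under $(g^{\vec{t}})_{*,m}^{\mathbb{C}}$, and symplecticity gives $(1/2i)[x,\bar{x}] = (1/2i)[y,\bar{y}]>0$. Finally, for the invariance (iv), for any $\vec{s}\in\mathbb{R}^k$,
\begin{equation}
(g^{\vec{s}})_{*,m'}^{\mathbb{C}}(r^n(m')) = (g^{\vec{s}+\vec{t}})_{*,m}^{\mathbb{C}}(N) = r^n(g^{\vec{s}}m'),
\end{equation}
which in particular yields the invariance under each $g^t_{F_j}$ required by the theorem.

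The last step is smoothness of $m'\mapsto r^n(m')$. Since $\vec{t}\mapsto (g^{\vec{t}})_{*,m}^{\mathbb{C}}(N)$ is smooth as a map from $\mathbb{R}^k$ into the Grassmannian of complex $n$-planes in $T(M^{2n})^{\mathbb{C}}$, and $\mathbb{R}^k\to\Lambda^k$ is a smooth covering by the orbit map, the well-definedness established above lets $r^n$ descend to a smooth map on $\Lambda^k$. The delicate point of the whole argument is really only the well-definedness in the first step: every other item reduces to the fact that the symplectic diffeomorphisms $g^{\vec{t}}$ preserve $\omega^2$ and $\Lambda^k$, transporting the properties of $N$ faithfully to each point of the torus.
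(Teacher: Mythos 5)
Your proof follows essentially the same route as the paper: set $r^n(m)=N$ at the base point, transport it along the commuting flows via $r^n(g^{\vec{t}}m)=(g^{\vec{t}})_{*,m}^{\mathbb{C}}(N)$, and check the four germ properties using symplecticity of $(g^{\vec{t}})_{*,m}$ and the group law $g^{\vec{t}+\vec{s}}=g^{\vec{t}}\circ g^{\vec{s}}$. You are in fact more explicit than the paper on the one genuinely delicate point, namely well-definedness of the transport when $g^{\vec{t}_1}m=g^{\vec{t}_2}m$, which is precisely where the hypothesis that $N$ is invariant under the monodromy operators $G_i$ is used; the paper leaves this step implicit.
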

\begin{proof}
	The idea is to construct a smooth map $r^n : m \rightarrow r^n(m)$ such $r^n(m) \subset T_m(M^{2n})$,
	$\forall m \in \Lambda^k$ satisfying Definition \ref{def1}.
	Let us define $r^n(m)=N$. Since that map $f : \vec{t} \rightarrow g^{\vec{t}}m$, $\vec{t} \in \Re^k$ is subjective,  we have that $\forall p \in \Lambda^k$, there exist $m \in \Lambda^k$ and $\vec{s} \in \Re^k$ such
	that $g^{\vec{s}}m=p$. Let us put 
	\begin{equation}
     r^n(p)=(g^{\vec{s}})_{*,m}(r^n(m)).
     \label{germ1}
	\end{equation}
	Since the operator $(g^{\vec{s}})_{*,m}$ is symplectic and carries the subspace $T_m(\Lambda^k)$ in $T_p(\Lambda^k)$
	then $r^n(p)~~ \forall p \in \Lambda^k$ is lagrangian, disipative with respect to  $T_p(\Lambda^k)$.
	
	Now, we verify that  $r^n(m)$ is invariant: 
	By Definition of Germ in \eqref{germ1} we have
		\begin{equation}
		(g^{\vec{t}})_{*,m}(r^n(m))=(g^{\vec{t}})_{*,p} o (g^{\vec{s}})_{*,m}(r^n(m)).
		\label{germ2}
		\end{equation}
	Using $g^{\vec{t}+\vec{s}}=g^{\vec{t}} o g^{\vec{s}}$ in \eqref{germ2} we obtain
		\begin{equation}
	(g^{\vec{t}})_{*,p} o (g^{\vec{s}})_{*,m}(r^n(m))=(g^{\vec{t}+\vec{s}})_{*,m}(r^n(m)).
	\label{germ4}
		\end{equation}
	Using again \eqref{germ1} in \eqref{germ4}, we obtain the invariance
		\begin{equation}
		(g^{\vec{t}+\vec{s}})_{*,m}(r^n(m))=r^n(g^{\vec{t}+\vec{s}}m)=r^n(g^{\vec{t}} o g^{\vec{s}}m)=r^n(g^{\vec{t}}p).
		\label{germ5}
		\end{equation}
		On the other hand, since $(g^{0})_{*,m}=E_{2n}$, where $E_{2n}$ is identity map, then by choosing
		appropriately the vector $\vec{t}$ we obtain $(g^{\vec{t}})_{*,m}=g_j^tm$, where $t \in \Re$ and
		$g^t_j$ is the Hamiltonian flux associated to the function $F_j$, with $j=1,\ldots,k$. As a consequence
		the Germ is invariant respect to $g^t_j$. The smoothness of the Germ we proof in the Appendix A. 
\end{proof}

\begin{remark}
	From Lemma \ref{lemd1} we have that is possible to construct the complex germ if the
	operators $\Xi_i$, $i=1,\ldots,k$ have a common positive, lagragian and invariant (P.L.I) linear subspace.
	\label{remhj1}
\end{remark}

\begin{remark}
	Proof of Theorem \ref{thep} consists in the construction of complex germ analogously as done
	in \cite{bal2}. In this way, we proof the the map $r^n : m \rightarrow r^n(m)$ is smooth.
\end{remark}

Now following Remark \ref{remhj1}, we find that sufficient conditions on operators $\Xi_i$, $i=1,\ldots,k$  such that they has
a common P. L. I subspace.

\begin{definition}
	Let $(M^{2n},\omega^2)$ a symplectic manifold. The subspace $L \subset T_m(M^{2n})$ in a point $m \in M^{2n}$ is simplectic
	if the restriction of $\omega^2$ a $L$ is no-degenerate.
	\label{delnod1}
\end{definition}
 
\begin{definition}
	A lineal transformation $S L_1 \rightarrow L_2$ between two lineal spaces is called stable if $\forall \epsilon >0$ 
	$\exists \delta>0$ such that $|x| < \delta$ then $|S^n(x)|<\epsilon$ $\forall n  \mathbb{N}, n>0$ (see \cite{arnold2}).
\end{definition}

\begin{proposition}
	A symplectic map is  stable if and only if all its eigenvalues belong to the unitary circle and $S$ is diagonalizable.
	\label{prop1}
\end{proposition}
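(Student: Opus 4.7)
The plan is to prove the two implications separately, working with the complexification so that we can put $S$ in Jordan form over $\mathbb{C}$.

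For the easy direction (diagonalizable with unit-circle spectrum implies stable), I would write $S = P D P^{-1}$ with $D = \mathrm{diag}(\lambda_1,\ldots,\lambda_m)$ and $|\lambda_j|=1$ for all $j$. Then $S^n = P D^n P^{-1}$ and $\|D^n\|=1$ in any norm induced by a basis of eigenvectors, which yields a uniform bound $\|S^n\| \leq \|P\|\,\|P^{-1}\|$ independent of $n$. Given $\epsilon>0$, the choice $\delta = \epsilon / (\|P\|\,\|P^{-1}\|)$ does the job. Real/complex bookkeeping is standard: a real stable estimate follows from the complex one after identifying $L$ with its realification.

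For the harder direction (stable implies all eigenvalues on the unit circle and $S$ diagonalizable), I would proceed in two steps. First, I would rule out eigenvalues off the unit circle. If $\lambda$ is an eigenvalue with $|\lambda|>1$ and $v$ a corresponding eigenvector, then $|S^n v| = |\lambda|^n |v| \to \infty$, directly contradicting stability applied to $tv$ for small $t>0$. The case $|\lambda|<1$ is excluded by invoking the key symplectic fact that the spectrum of $S$ is invariant under $\lambda \mapsto 1/\lambda$ (which follows because the characteristic polynomial satisfies the reciprocity $\lambda^{\dim L}\det(S-\lambda^{-1}I)=\pm\det(S-\lambda I)$ for a symplectic $S$); so $|\lambda|<1$ would force the existence of an eigenvalue of modulus $>1$, already excluded. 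Hence every eigenvalue lies on the unit circle.

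Second, I would exclude non-diagonalizability. If $S$ is not diagonalizable, there is a Jordan block of size $k\geq 2$ attached to some unit-modulus eigenvalue $\lambda$, so on an invariant subspace $S$ acts as $\lambda I + N$ with $N$ nilpotent and $N^{k-1}\neq 0$. Since $\lambda I$ and $N$ commute, the binomial expansion gives
\begin{equation*}
(\lambda I + N)^n \;=\; \sum_{j=0}^{k-1} \binom{n}{j}\lambda^{n-j} N^j ,
\end{equation*}
whose norm grows at least like $\binom{n}{k-1}\sim n^{k-1}/(k-1)!$ because $|\lambda|=1$ and $N^{k-1}\neq 0$. Picking $v$ in the invariant subspace with $N^{k-1}v\neq 0$ and rescaling by any $t>0$, we obtain $|S^n(tv)|\to\infty$, contradicting stability. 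Thus $S$ must be diagonalizable.

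The main obstacle is the reciprocity of the symplectic spectrum; once that is in hand (it is essentially $S^\top J S = J$ implying $S$ and $S^{-1}$ are similar), both halves of the necessity argument are quick, and the polynomial-growth bound for a Jordan block with unit-modulus eigenvalue is the only routine computation that needs care — the key point being to select a vector on which $N^{k-1}$ does not vanish, so that the $\binom{n}{k-1}$ term cannot be cancelled by lower-order contributions.
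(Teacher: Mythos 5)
Your proof is correct. Note that the paper does not actually prove Proposition \ref{prop1}; it only refers the reader to \cite{bal2}, so your argument supplies a self-contained proof where the text has none. The structure is the standard one and each step holds up: the sufficiency direction via the uniform bound $\|S^n\|\le\|P\|\,\|P^{-1}\|$ is fine; the exclusion of $|\lambda|>1$ by iterating on an eigenvector is fine (with the routine remark that for a real map one passes to $\mathrm{Re}\,v$ or $\mathrm{Im}\,v$ to get a real vector whose orbit is unbounded); and you correctly identify that symplecticity is used \emph{only} to rule out $|\lambda|<1$, via the reciprocity $\lambda\mapsto\lambda^{-1}$ of the spectrum coming from $S^{\top}JS=J$ (indeed, without that hypothesis a contraction would be a stable counterexample). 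Your Jordan-block computation is also sound, and you rightly flag the one point that needs care: choosing $v$ with $N^{k-1}v\neq 0$ so that the coefficient $\binom{n}{k-1}\lambda^{n-k+1}$ of $N^{k-1}v$ in the Jordan basis cannot be cancelled, whence $|S^n v|\to\infty$. The only cosmetic mismatch with the paper is that its definition of stability is stated for a map $S:L_1\to L_2$, which must be read with $L_1=L_2$ for the iterates $S^n$ to make sense; your argument implicitly and correctly does so.
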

The proof can be found in \cite{bal2}, where also is proved that the stability condition of symplectic operator is equivalent to have a P. L. I subspace. With this idea, we present previous Lemmas used to prove sufficient condition the existence of a P.L.I subspace for the  monodromy operators $\Xi_i$, $i=1,\ldots,k$.

Let $A$ y $B$ two stable operators.
Let $K_1=\{\sigma_1,\ldots,\sigma_{2n}\}$ the eigenvalues of $A$, where $1$ and $-1$  may also be included.
Therefore, we can do the partition $K_1=K_a \cup \{-1,1\} $, where $K_a=\{\sigma_1,\ldots,\sigma_r,\bar{\sigma_1},\ldots,\bar{\sigma_r}: Im \sigma_i \neq 0\}$ with
$r <=n$ and $\sigma_i$ distinct.
Analogously for the operator $B$ we have the partition  $K_2=K_b \cup \{-1,1\} $, where $K_b=\{\mu_1,\ldots,\mu_s,\bar{\mu_1},\ldots,\bar{\mu_s}: Im \mu_i \neq 0\}$ with
$s <=n$ and $\mu_i$ distinct.

Let us denote by $S_{\sigma}$ the subspace associated to the eigenvalue $\sigma$. In general the eigenvalues
$1$ and $-1$ they may not be included, these will be analyzed separately.

The following Lemma are valid

\begin{lemma}
	Let $A$ y $B$ two stable operators.
	Let us consider the restriction of the operator $A_j=A_{|S_{\sigma_j} \oplus S_{{S_{\bar \sigma_j}}}}$ and $B_j=B_{|S_{\sigma_j} \oplus S_{{S_{\bar \sigma_j}}}}$, where $\sigma_j \in K_a$. Then there exist
	a common P. L. I subspace for $A_j$ and $B_j$. 
		\label{lem1ab}
\end{lemma}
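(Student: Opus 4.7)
The plan is to exhibit an explicit common P.L.I.\ subspace of $L_j := S_{\sigma_j} \oplus S_{\bar\sigma_j}$. In the intended application the operators $A$ and $B$ arise from commuting Hamiltonian flows, so we may assume $AB=BA$; this is what makes $B_j$ well defined, because $B$ then preserves every $A$-eigenspace and in particular $B(L_j)=L_j$. For a stable symplectic operator, the identity $[Ax,Ay]=[x,y]$ forces $S_\sigma$ and $S_\mu$ to be $[\cdot,\cdot]$-orthogonal whenever $\sigma\mu\neq 1$, so on the unit circle the only eigenspace paired with $S_{\sigma_j}$ is $S_{\bar\sigma_j}=S_{1/\sigma_j}$ (we use $\sigma_j\neq\pm 1$, which makes $S_{\sigma_j}$ also self-orthogonal). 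Hence $L_j$ is a symplectic subspace of $\Gamma_m$, while $S_{\sigma_j}$ and $S_{\bar\sigma_j}$ are isotropic; since complex conjugation $x\mapsto\bar x$ is a real-linear isomorphism $S_{\sigma_j}\leftrightarrow S_{\bar\sigma_j}$, they have equal complex dimension and are Lagrangian inside $L_j$.

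Next I would equip $S_{\sigma_j}$ with the form $H(x,y):=(1/2i)\,[x,\bar y]$. Antisymmetry of $[\cdot,\cdot]$ together with the identity $\overline{[x,\bar y]}=-[x,\bar y]$ shows $H$ is Hermitian, and its non-degeneracy follows from non-degeneracy of the symplectic pairing $S_{\sigma_j}\times S_{\bar\sigma_j}\to\mathbb{C}$. Because $B$ is the complexification of a real operator, $\overline{Bz}=B\bar z$, and because it is symplectic, $H(Bx,By)=(1/2i)[Bx,B\bar y]=H(x,y)$. Stability makes $B|_{S_{\sigma_j}}$ diagonalizable, and the computation $H(v,w)=H(Bv,Bw)=\mu\bar\mu'\,H(v,w)$ for $B$-eigenvectors $v,w$ of unit-modulus eigenvalues $\mu,\mu'$ shows that distinct $B$-eigenspaces of $S_{\sigma_j}$ are automatically $H$-orthogonal.

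Within each $B$-eigenspace $E_\mu\subset S_{\sigma_j}$, both $A$ and $B$ act as scalars, so any linear subspace is $A$- and $B$-invariant; split $E_\mu$ $H$-orthogonally as $E_\mu^+\oplus E_\mu^-$ with $H>0$ on $E_\mu^+$ and $H<0$ on $E_\mu^-$, and set $P_j:=\bigoplus_\mu E_\mu^+$, $N_j:=\bigoplus_\mu E_\mu^-$. Then $S_{\sigma_j}=P_j\oplus N_j$ is an $H$-orthogonal, $A,B$-invariant decomposition with $H>0$ on $P_j$ and $H<0$ on $N_j$. The candidate subspace is
\[
R \;:=\; P_j \,\oplus\, \overline{N_j} \;\subset\; L_j .
\]
Invariance of $R$ under $A_j,B_j$ is inherited from invariance of $P_j,N_j$ together with $\overline{Cx}=C\bar x$ for real $C$. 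Isotropy: $P_j\subset S_{\sigma_j}$ and $\overline{N_j}\subset S_{\bar\sigma_j}$ are each isotropic, while $[P_j,\overline{N_j}]=2i\,H(P_j,N_j)=0$ by $H$-orthogonality; combined with $\dim_{\mathbb{C}}R=\dim P_j+\dim N_j=\dim S_{\sigma_j}=\tfrac{1}{2}\dim_{\mathbb{C}}L_j$, this makes $R$ Lagrangian. Positivity: for $w=x+\bar z\in R$ with $x\in P_j$, $z\in N_j$, the cross terms vanish by isotropy of $S_{\sigma_j}$ and $S_{\bar\sigma_j}$, and $(1/2i)[w,\bar w]=H(x,x)-H(z,z)>0$.

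The delicate step is the simultaneous choice of $P_j$ and $N_j$ that are both $B$-invariant and $H$-orthogonal; the key observation is that $H$-orthogonality of distinct $B$-eigenspaces reduces the problem to an internal splitting of each eigenspace, on which $A$ and $B$ act by scalars so any orthogonal decomposition is automatically invariant.
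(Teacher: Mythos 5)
Your proof is correct, but it follows a different route than the paper's. The paper first shows (as you do, via $AB=BA$) that $L_j=S_{\sigma_j}\oplus S_{\bar\sigma_j}$ is $B$-invariant, then decomposes $L_j$ into the symplectic blocks $S_{\mu}\oplus S_{\bar\mu}$, $S_1$, $S_{-1}$ of the eigenvalues of $B|_{L_j}$, invokes the single-operator construction from \cite{bal2} to get a P.L.I.\ subspace on each block, and sums them; $A$-invariance is then argued by claiming the generating eigenvectors of $B$ are also eigenvectors of $A$. You instead work entirely inside the $A$-eigenspace $S_{\sigma_j}$, introduce the Hermitian form $H(x,y)=(1/2i)[x,\bar y]$, split $S_{\sigma_j}$ into $B$-eigenspaces (automatically $H$-orthogonal), split each of those by the sign of $H$, and take $R=P_j\oplus\overline{N_j}$. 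This buys you two things: the construction is explicit and self-contained rather than deferred to \cite{bal2}, and it resolves cleanly the one delicate point in the paper's argument --- a vector of $L_j$ that is a $B$-eigenvector need not be an $A$-eigenvector unless it is chosen inside $S_{\sigma_j}$ or $S_{\bar\sigma_j}$, which your decomposition guarantees by design (on each joint eigenspace $E_\mu\subset S_{\sigma_j}$ both operators are scalars, so any $H$-orthogonal splitting is invariant). Two minor remarks: the Hermitian symmetry of $H$ follows from $\overline{[x,\bar y]}=[\bar x,y]=-[y,\bar x]$ rather than the identity you wrote, and you should note explicitly that $H$ restricted to each $E_\mu$ is nondegenerate (which follows since the $E_\mu$ are mutually $H$-orthogonal and $H$ is nondegenerate on $S_{\sigma_j}$); neither affects the validity of the argument.
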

\begin{proof}
	Let us consider the subspace $L=S_{\sigma_j} \oplus S_{{S_{\bar \sigma_j}}}$ which is symplectic and
	invariant respect to the operator $A$ (see \cite{bal2}), i.e. $AL=L$. We have that if $p$ the multiplicity of
	$\sigma_j \in K_a$ then $\dim(L)=2p$.
	
	a)Now we prove that the subspace $L$ is also invariant for the operator $B$. Since $A_jB=BA_j$ we have
	$A_jBL=BL$ therefore $BL$ is invariant for the operator $A_j$ and since $B$ is a symplectic diffeomorphism 
	then $\dim(BL)=2p$. 
	
	Let $S_{\sigma_j}=\{A_{j}x=\sigma_jx\}$, then for $x \in S_{\sigma_j}$ we have $B(A_j(x))=\sigma_jBx$.
	Using $BA_j=A_jB$ we obtain $A_j(Bx)=\sigma_jB(x)$ therefore $Bx \in S_{\sigma_j}$ and as a consequence
	$BL \subset L$. Using that the operator $B$ is a diffeomorphism we have $BL=L$ and the operator
	$B_j=B_{|L}$ is well defined.
	
	b) Now since $B$ is diagonalizable and symplectic operator in $L$, it is possible to obtain a descomposition
	through $K_2$ of L, i.e. 
	\begin{equation}
	L=\sum_{i=1}^{d}(S_{\mu_j}\oplus S_{\bar{\mu_j}})\oplus S_{1} \oplus S_{-1},
	\end{equation}
	where $S_{1}$ and $S_{-1}$ appear if $1$ or $-1$ are eigenvalues of $B$ in $L$. Let us consider
	the restriction ${B_{j}}_{|L_{\mu_j}}$ of the operator $B_j$ to $L_{\mu_j}=S_{\mu_j}\oplus S_{\bar{\mu_j}}$ then the following affirmation are true
		
	1- $L_{\mu_j} ,j=1,\ldots,d$ are symplectic operators and $B_j$ are stable, therefore there exist a subspace P.L.I for
	${B_{j}}_{|L_{\mu_j}}$ in this subspace which we denote by $R_{\mu_j}$ (see \cite{bal2} for the construction of this subpace).
	
	2-The subspace $S_1$ is symplectic  (see \cite{bal2}). Also we have ${B_j}_{|S_1}=Id_{|S_1}$, therefore all vector
	is eigenvalues for ${B_j}_{|S_1}$. Then we can choose a collection of vector in $S_1$ such as they generated a P.L.I. subspace. Let us denote by $R_o$. Also satisfy $AR_o=R_o$. Analogously, for  ${B_j}_{|S_{-1}}=Id_{|S_{-1}}$ is possible to construct a P.L.I subspace denoted by $R_{-1}$.
	
	Finally the subspace 
	\begin{equation}
	R_{\mu}=R_o \oplus R_{-1} \displaystyle \oplus R_{\mu_1}\oplus \ldots \oplus R_{\mu_d} ,
	\end{equation}
	is a common P.L.I subspace to $A_j$ and $B_j$ in $L$. Thus, the proof is a consequence of the way that such subspace are constructed, i.e. it are positive and Lagrangian in $L$. Also, they are invariant for $B_j$,i.e. $B_jR_{\mu}=R_{\mu}$. So, we need to prove that is invariant for $A_j$, i.e. $A_jR_{\mu}=R_{\mu}$ for any $j=1,\ldots,d$.
	
	The subspace $R_{\mu_{j}}$ is constructed  from $h$ eigenvector $e_1,\ldots,e_h$ ($h$ is a multiplicity of $\mu_{j}$
	as eigenvalues of $B_j$ in $L$), which  are associated either $\mu_{j}$ or $\bar{\mu_{j}}$.
	
	Let $e_i$ an arbitrary with $i=1,\ldots,h$. Since $ R_{\mu_1} \subset L$, there $e_i$ is an eigenvalues of $A$ associated
	to either $\sigma_j$ or $\bar{\sigma_j}$. Assume that $A_je_i=\sigma_j e_i$, therefore for $x \in  R_{\mu_1}$,   we have
	$A_jx \in  R_{\mu_1}$.

\end{proof}

\begin{lemma}
	Let $A$ y $B$ two stable operators.
	Let us consider the restriction of the operator $A_1=A_{|S_{\beta}}$ and $B_1=B_{|S_{\beta}}$, where $\beta$ is either $1$ or $-1$ which are eigenvalues of the operator $A$ and $B$. Then there exist a common P. L. I subspace for $A_{1}$ and $B_{1}$. 
	\label{lem2ab}
\end{lemma}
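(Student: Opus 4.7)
The plan is to reduce this to the single-operator case by exploiting that $A$ acts as a scalar on $S_\beta$. Concretely, I would proceed as follows.

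First I would check that $S_\beta$ is a symplectic subspace of $L$ (for $\beta = \pm 1$). This is a standard fact about stable symplectic operators, used already in the previous lemma via the reference \cite{bal2}: the eigenspaces of a stable symplectic operator associated to the real unit eigenvalues $\pm 1$ are symplectic, because the symplectic form pairs $S_\sigma$ with $S_{\bar\sigma^{-1}} = S_\sigma$ non-degenerately when $\sigma = \pm 1$. Second, I would use that $A$ and $B$ commute (inherited from the commutativity of the flows $g^{t}_{F_i}$ and hence of the monodromy operators $\Xi_i$) to conclude that $B$ preserves $S_\beta$: if $Av = \beta v$ then $A(Bv) = B(Av) = \beta (Bv)$, so $Bv \in S_\beta$. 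Thus $B_1 = B|_{S_\beta}$ is a well-defined symplectic operator on the symplectic space $S_\beta$, and it is stable because $B$ is.

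Now the key observation: $A_1 = A|_{S_\beta} = \beta \cdot \mathrm{Id}_{S_\beta}$. Hence \emph{every} subspace of $S_\beta$ is automatically invariant under $A_1$. This reduces the problem to finding a P.L.I.\ subspace for the single stable symplectic operator $B_1$ on the symplectic space $S_\beta$, which is guaranteed by the result from \cite{bal2} already invoked in the proof of Lemma \ref{lem1ab} (stability together with diagonalizability on the unit circle yields a positive Lagrangian invariant subspace; this was also used for the $S_1$ and $S_{-1}$ pieces appearing there).

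Let $R_\beta \subset S_\beta$ be such a P.L.I.\ subspace for $B_1$. Then $R_\beta$ is Lagrangian in $S_\beta$ and positive with respect to the restricted symplectic form, it is invariant under $B_1$ by construction, and invariant under $A_1 = \beta \mathrm{Id}$ trivially. So $R_\beta$ is the desired common P.L.I.\ subspace.

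I do not anticipate a serious obstacle: the content is essentially that when one operator is a scalar on the relevant invariant symplectic subspace, the two-operator problem collapses to the one-operator problem solved previously. The only mild care required is to be clear that the symplecticity of $S_\beta$ and the commutativity of $A$ and $B$ are precisely what make the single-operator construction of \cite{bal2} applicable inside $S_\beta$, and to note that if both $+1$ and $-1$ are present one applies the argument separately to $S_1$ and $S_{-1}$ and takes the direct sum — but as stated the lemma fixes a single $\beta$, so one application suffices.
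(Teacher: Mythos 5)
Your proposal is correct and follows essentially the same route as the paper: restrict to the symplectic eigenspace $S_\beta$ ($\beta=\pm1$), observe that one of the two operators acts there as the scalar $\beta\,\mathrm{Id}$, and apply the single-operator existence result of \cite{bal2} to the restriction of the other. If anything, your version is the more careful one: the paper simply ``chooses a collection of vectors in $S_1$ generating a P.L.I.\ subspace'' and then asserts its invariance under the remaining operator, whereas you correctly note that the P.L.I.\ subspace must be produced by the stability of the \emph{non-scalar} restriction (using commutativity to see that it preserves $S_\beta$), after which invariance under the scalar one is automatic.
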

\begin{proof}
	The subspace $S_1$ is symplectic  (see \cite{bal2}). Also we have ${B}_{|S_1}=Id_{|S_1}$, therefore all vector
	is eigenvalues for ${B}_{|S_1}$. Then we can choose a collection of vector in $S_1$ such as they generated a P.L.I. subspace. Let us denote by $R_o$. Also satisfy $AR_o=R_o$. Analogously, for  ${B}_{|S_{-1}}=Id_{|S_{-1}}$ is possible to construct a P.L.I subspace denoted by $R_{-1}$.
\end{proof}

Due to the condition impose on these operators, we have that Lemma \ref{mono1} is a Corollary of 
\begin{lemma}
Let $A$ and $B$ two stable symplectic  operators that commutate in the symplectic space $\mathbb{C}^{2n}$, then they has a common P.L.I subspace.  
\end{lemma}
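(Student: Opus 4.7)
The plan is to reduce to Lemmas \ref{lem1ab} and \ref{lem2ab} by decomposing $\mathbb{C}^{2n}$ along the spectrum of $A$. First, I would invoke Proposition \ref{prop1} to conclude that, being stable and symplectic, $A$ is diagonalizable with all eigenvalues on the unit circle. Grouping eigenvalues into conjugate pairs $\{\sigma_j,\bar\sigma_j\}$ with $\operatorname{Im}\sigma_j\neq 0$ for $j=1,\ldots,r$ together with the (possibly absent) real eigenvalues $\pm 1$, I would write
\begin{equation*}
\mathbb{C}^{2n} \;=\; \Bigl(\bigoplus_{j=1}^{r}L_j\Bigr)\oplus S_{1} \oplus S_{-1},\qquad L_j:=S_{\sigma_j}\oplus S_{\bar\sigma_j},
\end{equation*}
where each $L_j$ and each $S_{\pm 1}$ is an $A$-invariant symplectic subspace, and the summands are mutually symplectically orthogonal; these structural facts are the same ones quoted from \cite{bal2} in the proof of Lemma \ref{lem1ab}.

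Second, since $AB=BA$, each individual eigenspace $S_\sigma$ of $A$ is $B$-invariant: for $v\in S_\sigma$ one has $A(Bv)=B(Av)=\sigma Bv$. Hence each summand $L_j$, $S_1$, $S_{-1}$ is $B$-invariant. The restriction of a stable diagonalizable symplectic operator to a symplectic invariant subspace is again stable, diagonalizable and symplectic, and commutation passes to the restrictions, so the pairs $(A|_{L_j},B|_{L_j})$ and $(A|_{S_{\pm 1}},B|_{S_{\pm 1}})$ satisfy the hypotheses of Lemmas \ref{lem1ab} and \ref{lem2ab}, respectively. Applying those lemmas yields common P.L.I.\ subspaces $R_j\subset L_j$ ($j=1,\ldots,r$), $R_{+}\subset S_1$, $R_{-}\subset S_{-1}$.

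Third, I would set
\begin{equation*}
R \;:=\; R_{+}\oplus R_{-}\oplus \bigoplus_{j=1}^{r} R_j
\end{equation*}
and verify that $R$ is a common P.L.I.\ subspace for $A$ and $B$ on $\mathbb{C}^{2n}$. Invariance under $A$ and $B$ is immediate summand by summand. The Lagrangian property reduces to a dimension count: each $R_j$ is Lagrangian in $L_j$ and $R_\pm$ is Lagrangian in $S_{\pm 1}$, so $\dim R=\tfrac{1}{2}\dim\mathbb{C}^{2n}=n$. Isotropy and the positivity condition $(1/2i)[x,\bar x]>0$ for $x=\sum x_i\in R\setminus\{0\}$ reduce to the corresponding statements on each summand, because all cross terms $[x_i,\bar x_j]$ with $i\neq j$ vanish by the symplectic orthogonality of the decomposition.

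The main obstacle I expect is the step where $B$-invariance of every $L_j$ and of $S_{\pm 1}$ must be established; this is the unique place where the commutation hypothesis $AB=BA$ is used, and it is what allows the whole argument to split along the spectrum of $A$. Once that is in hand, the remaining work is bookkeeping: checking that direct sums of P.L.I.\ subspaces taken over symplectically orthogonal summands remain P.L.I.\ in the ambient space, which is routine but must be recorded explicitly to promote the piecewise conclusions of Lemmas \ref{lem1ab}--\ref{lem2ab} to the global statement.
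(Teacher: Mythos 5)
Your proposal is correct and follows essentially the same route as the paper: decompose $\mathbb{C}^{2n}$ into the symplectic summands $S_{\sigma_j}\oplus S_{\bar\sigma_j}$, $S_{1}$, $S_{-1}$ determined by the spectrum of $A$ (via Proposition \ref{prop1}), invoke Lemmas \ref{lem1ab} and \ref{lem2ab} on each summand, and assemble the direct sum. You merely make explicit two steps the paper leaves implicit --- the $B$-invariance of each summand via $AB=BA$ and the verification that a direct sum of P.L.I.\ subspaces over symplectically orthogonal summands is again P.L.I. --- which is a welcome addition but not a different argument.
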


\begin{proof}
By proposition \ref{prop1} the operators $A$ and $B$ have its eigenvalues in the unitary circle and are diagonalizable.
Then we have $\mathbb{C}^{2n}=\sum_{i=1}^{d}(S_{\sigma_j}\oplus S_{\bar{\sigma_j}})\oplus S_{1} \oplus S_{-1}$, where
$\sigma_j$, $\bar{\sigma_j}$, $1$ and $-1$ are eigenvalues of $A$ (in general $1$ and $-1$ are not necessarily eigenvalues of $A$). Now from  Lemmas \ref{lem1ab} and	\ref{lem2ab} we obtain that there exist a common P.L.I subspace for the $A$ and $B$.
 \end{proof}

\begin{lemma}
	The reduce monodromy operators  $\Xi_i$, $i=1,\ldots,k$ are stable then they have a common P.L.I. subspace.
	\label{mono1}
\end{lemma}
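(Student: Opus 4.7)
The plan is to extend the two-operator lemma just established to the family of $k$ reduced monodromy operators by performing a simultaneous diagonalization argument. This reduces to verifying that (i) the $\Xi_i$ pairwise commute, and (ii) each is diagonalizable with spectrum on the unit circle, and then mimicking the block-by-block construction used in Lemmas \ref{lem1ab} and \ref{lem2ab}.

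First I would verify commutativity. The Hamiltonian flows $g^t_{F_i}$ and $g^s_{F_j}$ commute because $F_1,\ldots,F_k$ are in involution; hence their derivatives $G_i$, $G_j$ at $m$ commute on $T_m(M^{2n})$. Since $\Xi_i([\theta]) = [G_i(\theta)]$ by Lemma \ref{lem13}, the induced operators $\Xi_i$ commute on $\Gamma_m$. Next, using the stability hypothesis, Proposition \ref{prop1} gives that each $\Xi_i$ is diagonalizable and has all eigenvalues on the unit circle. A commuting family of diagonalizable operators is simultaneously diagonalizable, so $\Gamma_m$ decomposes as $\bigoplus_\Lambda E_\Lambda$ where $E_\Lambda$ is the joint eigenspace associated with the $k$-tuple $\Lambda = (\lambda_1,\ldots,\lambda_k)$ of eigenvalues of $(\Xi_1,\ldots,\Xi_k)$.

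I would then partition this decomposition into symplectic blocks, exactly as in the proof of Lemmas \ref{lem1ab} and \ref{lem2ab}. Because each $\Xi_i$ is symplectic and $|\lambda_i|=1$, the form $[\cdot,\cdot]$ pairs $E_\Lambda$ with $E_{\bar\Lambda}$, so $L_\Lambda := E_\Lambda \oplus E_{\bar\Lambda}$ is a symplectic subspace whenever at least one coordinate of $\Lambda$ is non-real; for the remaining joint eigenspaces, where every $\lambda_i \in \{-1, +1\}$, each $\Xi_i$ restricts to $\pm \mathrm{Id}$ and the joint eigenspace is itself symplectic. Within each $L_\Lambda$ block I would take $E_\Lambda$ itself (rescaling basis vectors if necessary to make $(1/2i)[\cdot,\bar\cdot]$ positive on it): this space is automatically isotropic, Lagrangian of half-dimension in $L_\Lambda$, and invariant under every $\Xi_i$ since each $\Xi_i$ acts as the scalar $\lambda_i$ on it. Within each $\pm 1$ block I would select any Lagrangian on which $(1/2i)[\cdot,\bar\cdot]$ is positive definite; invariance is automatic because $\Xi_i$ acts as a scalar there.

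Taking the direct sum of these P.L.I. subspaces over all blocks produces a common P.L.I. subspace for $\Xi_1,\ldots,\Xi_k$, as required. The step I expect to require the most care is the $\pm 1$ block: on a symplectic space where every $\Xi_i$ acts trivially (up to a sign), invariance is free but one must still exhibit a Lagrangian on which the Hermitian form $(1/2i)[\cdot,\bar\cdot]$ is positive definite. This is the standard existence of a positive Lagrangian subspace in a complex symplectic space, but the bookkeeping of positivity across all blocks simultaneously — and the verification that $E_\Lambda$ really is Lagrangian of the correct dimension inside $L_\Lambda$ — is the place where the argument could slip, and is exactly where I would cite and lean on the construction in \cite{bal2}.
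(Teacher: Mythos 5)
Your overall route --- simultaneous diagonalization of the commuting family $\Xi_1,\ldots,\Xi_k$ into joint eigenspaces $E_\Lambda$, followed by a block-by-block construction --- is legitimate and in fact more explicit than the paper's own argument, which treats only two operators in detail and then asserts that the case $k>2$ is ``similar.'' The commutativity step, the use of Proposition~\ref{prop1}, and the pairing argument showing that $[E_\Lambda,E_M]=0$ unless $M=\bar\Lambda$ (since $\lambda_i\mu_i=1$ and $|\lambda_i|=1$ force $\mu_i=\bar\lambda_i$) are all correct and correctly produce the symplectic blocks $L_\Lambda=E_\Lambda\oplus E_{\bar\Lambda}$.

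There is, however, a genuine error at the key step inside a block having some non-real eigenvalue: $E_\Lambda$ itself is in general \emph{not} a positive subspace, and ``rescaling basis vectors'' cannot make it one. The Hermitian form $h(x)=(1/2i)[x,\bar x]$ restricted to $E_\Lambda$ is non-degenerate (because $\overline{E_\Lambda}=E_{\bar\Lambda}$ and $L_\Lambda$ is symplectic) but may be indefinite; replacing $x$ by $cx$ multiplies $h(x)$ by $|c|^2>0$, so the signature of $h$ on $E_\Lambda$ is a rescaling invariant, and if $h$ is negative on some vector of $E_\Lambda$ no choice of basis repairs this. The correct construction --- the one in \cite{bal2} on which Lemma~\ref{lem1ab} leans --- is to split $E_\Lambda=E_\Lambda^{+}\oplus E_\Lambda^{-}$ into the positive and negative parts of $h$ and take $R_\Lambda=E_\Lambda^{+}\oplus\overline{E_\Lambda^{-}}$. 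This subspace is isotropic (each summand lies in a single joint eigenspace with $\lambda_i^2\neq 1$ for some $i$, and the cross terms $[x,\bar y]$ vanish by $h$-orthogonality of $E_\Lambda^{+}$ and $E_\Lambda^{-}$), has dimension $\dim E_\Lambda=(1/2)\dim L_\Lambda$, is positive because $(1/2i)[\bar y,y]=-h(y)>0$ for $y\in E_\Lambda^{-}$, and is invariant because each $\Xi_i$ is the complexification of a real operator and therefore acts as $\lambda_i$ on $E_\Lambda^{\pm}$ and as $\bar\lambda_i$ on $\overline{E_\Lambda^{-}}$. With this replacement (your treatment of the blocks where every $\lambda_i\in\{\pm1\}$, namely choosing any positive Lagrangian there, is fine since invariance is automatic), the direct sum over all blocks is a common P.L.I.\ subspace and your argument goes through.
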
 
\begin{proof}
	The construction of the a common subspace for $k>2$ is similar. After we have for two operators we construct for the rest.
\end{proof}
\subsection{Existence of the Germ}
It is valid the following

\begin{theorem}
If the reduce monodromy operators  $\Xi_i$, $i=1,\ldots,k$ are stable then there exist a complex germ invariant respect Hamiltonian system $(M^{2n},\omega^2,F_j)$, with $j=1,\ldots,k$ on the torus $\Lambda^k$.
\label{thef1}
\end{theorem}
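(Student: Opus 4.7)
The plan is to chain together the three main tools developed in the preceding subsection: Lemma \ref{mono1} (stability implies a common P.L.I. subspace downstairs in $\Gamma_m$), Lemma \ref{lemd1} (lifting a P.L.I. subspace in $\Gamma_m$ via $\Pi^{-1}$ yields a lagrangian subspace upstairs, dissipative w.r.t.\ $T_m(\Lambda^k)$ and invariant under the $G_i$), and Theorem \ref{thep} (such a subspace at one point can be spread out by the phase flow to produce a smooth complex germ on the whole torus).

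First I would fix an arbitrary reference point $m \in \Lambda^k$ and invoke Lemma \ref{mono1} applied to the reduced monodromy operators $\Xi_1,\ldots,\Xi_k$ on $\Gamma_m = T_m(\Sigma)/T_m(\Lambda^k)$. By hypothesis all of these are stable, so they admit a common positive, lagrangian and invariant subspace $R \subset \Gamma_m$. Next, I would set $N := \Pi^{-1}(R) \subset T_m(\Sigma) \subset T_m(M^{2n})$, where $\Pi : T_m(\Sigma) \to \Gamma_m$ is the canonical quotient map. By Lemma \ref{lemd1}, $N$ is lagrangian in $T_m(M^{2n})$, dissipative with respect to $T_m(\Lambda^k)$, and, using the intertwining relation $\Xi_j\circ\Pi=\Pi\circ G_j$ from equation \eqref{rel1}, it is invariant under each of the monodromy operators $G_1,\ldots,G_k$.

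At this stage the hypotheses of Theorem \ref{thep} are satisfied at the point $m$, so I would apply that theorem with the subspace $N$ just constructed. The conclusion provides a smooth assignment $p \mapsto r^n(p) = (g^{\vec{s}})_{*,m}(N)$, where $\vec{s} \in \Re^k$ is any vector with $g^{\vec{s}}m = p$, giving a lagrangian, dissipative, invariant family along the whole torus $\Lambda^k$; this is exactly a complex germ in the sense of Definition \ref{def1} for the Hamiltonian systems $(M^{2n},\omega^2,F_j)$, $j=1,\ldots,k$.

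The only point requiring care — and what I would expect to be the real technical obstacle — is the well-definedness of $r^n(p)$ in Theorem \ref{thep}: if $g^{\vec{s}_1}m = g^{\vec{s}_2}m$, then $\vec{s}_1-\vec{s}_2 \in G$, and one needs $(g^{\vec{s}_1-\vec{s}_2})_{*,m}(N)=N$. This is precisely the $G_i$-invariance of $N$ (combined with the factorization \eqref{princ} of any element of $G$ in terms of the generating periods $T_i$), which has already been secured in the previous paragraph. Smoothness of the resulting map $p \mapsto r^n(p)$ is inherited from the smoothness of the flows $g^t_{F_j}$, as indicated in the proof of Theorem \ref{thep} and the appendix reference therein, so no further argument is needed here.
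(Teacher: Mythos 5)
Your proposal follows exactly the paper's route: the paper's proof is the one-line observation that the theorem is a consequence of Lemma \ref{mono1} and Theorem \ref{thep}, with Lemma \ref{lemd1} supplying the lift $N=\Pi^{-1}(R)$ as you describe. Your additional remark on the well-definedness of $r^n(p)$ via the $G_i$-invariance of $N$ is a correct and welcome elaboration of a point the paper leaves implicit.
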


The proof is a consequence of Lemma \ref{mono1} and Theorem \ref{thep}.
	
Further we despited a necessary condition for the existence of the Germ.

\begin{theorem}
Assume that there exist a complex germ on the isotropic torus $\Lambda^k$ invariant respect to the Hamiltonian flow
$\dot{x}=IdF_j(x)$, with $j=1,\ldots,k$. Then the reduce monodromy operators  $\Xi_i$, $i=1,\ldots,k$ are stable.
\begin{proof}
	We have $\forall m \in \Lambda^k$ there exist complex subspace lagrangian $r^n(m) \subset T_n(M^{2n})$ such that
	$r^n(m) \supset T_m(\Lambda^k)$ which is dissipative respect of $T_m(\Lambda^k)$, i.e. $\forall x \in r^n(m) \smallsetminus T_m(\Lambda^k)$ we have $[x,\bar{x}]/2i >0$ (condition \eqref{der2a} of Definition
	\ref{def1}). Also, for $t \in \Re$ we have condition \eqref{der1a} of Definition
	\ref{def1}. Therefore, the flow $g^{\vec{t}}=g^{t_1} o \ldots o g^{t_k}: M^{2n} \rightarrow M^{2n}$, where $t=(t_1,\dots,t_n) \Re^k$ satisfy $g^{\vec{t}}_{*,m}(r^n(m))=r^n(g^{\vec{t}}m)$. For $T \in G$ ($G$ discrete subgroup of $\Re^k$ defined in \label{mono1a}) we have 
	\begin{equation*}
g^{T}_{*,m}(r^n(m))=r^n(g^{T}m),
	\end{equation*}
	which mean that $G_j(r^n(m))=r^n(m)$, where $G_j$, $j=1,\ldots,k$ are the monodromy operators, which with period $T_j$, with $j=1,\ldots,k$  generating the subgroup $G$.
	Let us consider the canonical projection map $\Pi: T_m(\Sigma) \rightarrow \Gamma_m=T_m(\Sigma)/T_m(\Lambda^k)$  and let
	$R=\Pi(r^n(m))$.
	
	Now only rest to prove that the subspace $R$ is lagragian, positive in $\Gamma_m$ and invariant respect of the operators
	$\Xi_j$ with $j=1,\ldots,k$.
	Since $\Pi^{-1}(0)=T_m(\Lambda^k)$ we have $\dim(R)=\dim(r^n(m))-k=n-k=(1/2)\dim(\Gamma_m)$. From the definition of symplectic structure in the quotient space $\Gamma_m$ we obtain that $R$ is isotropic and a consequence lagragian en $\Gamma_m$.
	
	We have that if $\Pi(x)\neq 0$ then $x \not\in T_m(\Lambda^k)$. Using that $R$ is disipative respect of $T_m(\Lambda^k)$
	and that $\overline{\Pi(x)}=\Pi(\bar x)$ we have $(1/2i)[x,\bar x]=[\Pi(x),\Pi(\bar x)]=[\Pi(x),\overline{\Pi(x)}]>0$ and as consequence 
	$R$ is positive.
	
	Since $\Xi_j o \Pi=\Pi o G_j$, with $j=1,\ldots,k$ follows that $R$ is invariant respect of $\Xi_j$, with $j=1,\dots,k$.
	Thus we obtain a P.L.I subspace common for the operators  $\Xi_j$, with $j=1,\dots,k$ therefore this operators are
	diagonalizables and its eigenvalues belong to unitary circle then by Proposition \ref{prop1} are stables.
	
\end{proof}

\end{theorem}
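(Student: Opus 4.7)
The strategy is to invert the construction used in Theorem \ref{thep}: starting from an invariant complex germ, produce a common P.L.I. subspace for the reduced monodromy operators $\Xi_i$, and then appeal to Proposition \ref{prop1} to conclude stability. Fix a point $m \in \Lambda^k$ and set $R := \Pi(r^n(m))$, where $\Pi : T_m(\Sigma) \to \Gamma_m$ is the canonical projection. My first task is to check $R$ is well-defined, which amounts to observing that $r^n(m) \subset T_m(\Sigma)$; this follows from $T_m(\Lambda^k) \subset r^n(m)$ combined with the fact that each $JdF_i$ is tangent to every level surface $\{F_i=f_i\}$ together with a dimension count on the isotropic subspace.

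Next I would transfer each of the three geometric conditions on $r^n(m)$ to $R$. For the dimension, $\dim R = \dim r^n(m) - \dim\ker(\Pi|_{r^n(m)}) = n - k = \tfrac{1}{2}\dim\Gamma_m$, using $\ker\Pi = T_m(\Lambda^k) \subset r^n(m)$. For isotropy, I would use that the induced form on $\Gamma_m$ is, by construction, $[\Pi(x),\Pi(y)] = [x,y]$ (which is the content of the compatibility shown in the proof of Lemma \ref{lem13}); since $r^n(m)$ is lagrangian, $R$ is lagrangian in $\Gamma_m$. For positivity, pick $[\theta] \in R$ with $[\theta] \neq 0$; then any representative $x \in r^n(m)$ satisfies $x \notin T_m(\Lambda^k)$, so the dissipativity of $r^n(m)$ gives $(1/2i)[x,\bar x] > 0$, and the identity $(1/2i)[\Pi(x),\overline{\Pi(x)}] = (1/2i)[x,\bar x]$ (using $\overline{\Pi(x)} = \Pi(\bar x)$) yields positivity of $R$.

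For invariance under $\Xi_i$, I use the defining property \eqref{der1a} of a complex germ with $t = T_i$: since $T_i \in G$ and $g^{T_i} m = m$, the monodromy operator $G_i$ preserves $r^n(m)$, i.e. $G_i(r^n(m)) = r^n(m)$. Composing with $\Pi$ and applying the intertwining relation \eqref{rel1} yields $\Xi_i(R) = \Xi_i(\Pi(r^n(m))) = \Pi(G_i(r^n(m))) = \Pi(r^n(m)) = R$, so $R$ is a common invariant subspace for all the $\Xi_i$.

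Finally, the existence of a common P.L.I. subspace $R$ forces each $\Xi_i$ to preserve a positive lagrangian subspace of $\Gamma_m$, which by the stability criterion recalled in the paper (Proposition \ref{prop1}, together with the equivalence noted between stability and the existence of a P.L.I. subspace, proved in \cite{bal2}) implies that each $\Xi_i$ is diagonalizable with eigenvalues on the unit circle, hence stable. The only nontrivial point I expect to double-check is that $\Pi$ really does intertwine $G_i$ and $\Xi_i$ in this direction, but this is built into Definition \ref{redu} of $\Xi_i$ via $[\theta] \mapsto [G_i(\theta)]$, so nothing beyond the already-proved lemmas is needed.
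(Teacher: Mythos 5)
Your proposal follows essentially the same route as the paper's own proof: project the germ fibre $r^n(m)$ under $\Pi$ to obtain $R=\Pi(r^n(m))$, verify that $R$ is lagrangian, positive, and invariant under every $\Xi_i$ (using the dimension count via $\ker\Pi=T_m(\Lambda^k)$, the compatibility of $[\,,\,]$ with the quotient, and the intertwining relation $\Xi_i\circ\Pi=\Pi\circ G_i$), and then invoke Proposition \ref{prop1} together with the equivalence between stability and possession of a P.L.I.\ subspace. Your added remark that one must first check $r^n(m)\subset T_m(\Sigma)$ (via isotropy of $r^n(m)$ and $JdF_i\in r^n(m)$) is a small point the paper leaves implicit, but it does not change the argument.
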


\section{Uniqueness of the Germ}

In this section we discuss about sufficient and necessary conditions of complex Germ. This issue is crucial since the germ is used in the construction of an asymptotic quase-classic it necessary to verify if obtained in this way is unique, which depend on the uniqueness of the germ. To display here a full characterization we summarized some basic concept.

\begin{definition}
An stable map $S: L_1 \rightarrow L_2$ is called strong stable if all close map is stable.
\label{def2a}
\end{definition}
What means close map in above Definition?. Let consider the group of the symplectic map which constitute a submanifold of
the lineal map of $\Re^n$. We consider any distance between two lineal map on $\Re^n$ as a distance between the respective 
matrix in a prefixed basis. i.e. Let $[s_{ij}]$ and $[s^{'}_{ij}]$ the matricial representantin of
two lineal map $L_1, L_2 : L^1 \rightarrow L^2$ then they are close if $\max |s_{ij}-s^{'}_{ij}| \le \epsilon$, $\forall \epsilon >0$ and $i,j=1,\ldots,k$. 

Let $A: \mathbb{C}^{2n} \rightarrow \mathbb{C}^{2n}$ an symplectic operator and $\sigma$ an eigenvalues of $A$. We denote by
$L_{\sigma}$ the maximal invariant subspace respect to $A$ associated to $\sigma$.

\begin{definition}
	The eigenvalues $\sigma$ is called elliptic positive (negative) if the subspace $L_{\sigma}$ is positive (negative).
\end{definition}  

In \cite{bal2} a collection of results of the elliptic eigenvalues are obtained. Here we summarize those we will use
\begin{proposition}
	An symplectic map  $L: \mathbb{C}^{2n} \rightarrow \mathbb{C}^{2n}$ is strong stable if all its eigenvalues are elliptic belong to the unitary circle. 
\end{proposition}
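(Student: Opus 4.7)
The plan is to verify the sufficient condition: assuming every eigenvalue $\sigma$ of $L$ lies on the unit circle and is elliptic (so the maximal $L$-invariant subspace $L_\sigma$ is definite under the Hermitian form $h(x,y):=(1/2i)[x,\bar y]$), I will show that every symplectic map $L'$ sufficiently close to $L$ is stable, from which strong stability follows by Definition \ref{def2a}.

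First I would describe the reference decomposition. Since each $\sigma$ lies on the unit circle, $\sigma^{-1}=\bar\sigma$, and the symplectic relation $[Lx,Ly]=[x,y]$ forces eigenvectors of $L$ attached to different eigenvalues $\sigma_i,\sigma_j$ with $\sigma_i\sigma_j\neq 1$ to be $h$-orthogonal. Because ellipticity of $\sigma$ (definiteness of $h$ on $L_\sigma$) rules out $L_\sigma$ being paired with $L_{\bar\sigma}$ as a hyperbolic block, $L_\sigma$ and $L_{\bar\sigma}$ coincide when $\sigma=\bar\sigma$ and are individually definite otherwise. Since $L$ is stable, Proposition \ref{prop1} gives that $L$ is diagonalizable, so each $L_\sigma$ is an actual eigenspace, and $\mathbb{C}^{2n}$ decomposes as the $h$-orthogonal direct sum of definite subspaces.

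Next, for a small symplectic perturbation $L'$, the Riesz spectral projectors onto clusters of eigenvalues of $L'$ near each $\sigma_j$ depend holomorphically on $L'$; hence the total invariant subspaces $L'_j$ of $L'$ associated with these clusters vary continuously and remain close to $L_{\sigma_j}$. Definiteness of $h$ is an open condition on subspaces, so $h|_{L'_j}$ is still definite. Now the Krein-theoretic dichotomy is invoked: for any symplectic map, if $\lambda$ is an eigenvalue with $|\lambda|\neq 1$ then $1/\bar\lambda$ is also an eigenvalue, and $h$ pairs their eigenspaces nontrivially and isotropically, forcing indefiniteness on their span. Similarly, a nontrivial Jordan block at an eigenvalue on the unit circle produces an $h$-isotropic vector inside the generalized eigenspace. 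Either possibility contradicts $h|_{L'_j}$ being definite, so all eigenvalues of $L'$ must remain on the unit circle and $L'$ must be diagonalizable on every $L'_j$.

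Finally, combining these conclusions, $L'$ is diagonalizable with spectrum in the unit circle, so Proposition \ref{prop1} yields stability of $L'$; since $L'$ was an arbitrary small symplectic perturbation, $L$ is strongly stable. I expect the main obstacle to be the Krein pairing step: one must exhibit explicitly how $h$ couples the $\lambda$ and $1/\bar\lambda$ eigenspaces and analyze the block form of $h$ on generalized eigenspaces at $\pm 1$ (where self-pairing occurs), in order to rule out both eigenvalue escape from the unit circle and Jordan-block formation. The other steps (continuity of spectral projectors, openness of definiteness, diagonalizability of the ambient $L$) are standard.
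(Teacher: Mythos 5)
The paper does not actually prove this proposition: it is stated as a result imported from \cite{bal2} (``Here we summarize those we will use''), so there is no in-text argument to compare yours against. On its own merits, your proposal is essentially correct: it is the classical Krein--Gelfand--Lidskii--Moser strong-stability argument, and all of its ingredients (the $h$-orthogonality of root subspaces for eigenvalues $\sigma_i,\sigma_j$ with $\sigma_i\bar\sigma_j\neq 1$, continuity of Riesz projectors, openness of definiteness, and the exclusion of off-circle eigenvalues and Jordan blocks inside a definite invariant subspace) are sound and assemble into a valid proof. Two small remarks. First, you write ``Since $L$ is stable, Proposition \ref{prop1} gives that $L$ is diagonalizable,'' but stability of $L$ is not among the hypotheses; this is harmless because ellipticity already yields diagonalizability directly: $L$ restricted to $L_\sigma$ preserves the definite Hermitian form $h$, hence is unitary with respect to the inner product $\pm h$, hence diagonalizable with unimodular spectrum. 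Second, the ``Krein pairing step'' you flag as the main obstacle can be bypassed entirely by the same observation applied to the perturbed operator: once $h$ is definite on the $L'$-invariant cluster subspace $L'_j$, the restriction $L'|_{L'_j}$ is an isometry of a definite form, hence unitary, hence diagonalizable with all eigenvalues on the unit circle --- no case analysis of hyperbolic pairs or Jordan blocks at $\pm 1$ is needed. With that shortcut your argument closes cleanly and yields stability of every nearby symplectic $L'$ via Proposition \ref{prop1}, i.e.\ strong stability of $L$.
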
 
Also the following proposition is valid

\begin{proposition}
	An symplectic map  $L: \mathbb{C}^{2n} \rightarrow \mathbb{C}^{2n}$ is strong stable if has a unique P.L.I subspace.
	\label{uniq1}
\end{proposition}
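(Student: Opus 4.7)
The plan is to use the preceding proposition: I will show that uniqueness of the P.L.I. subspace forces every eigenvalue of $L$ to be elliptic (positive or negative) on the unit circle, from which strong stability follows. First, since $L$ admits a P.L.I. subspace, the equivalence recalled after Proposition \ref{prop1} gives that $L$ is stable, and Proposition \ref{prop1} then makes $L$ diagonalizable with spectrum on the unit circle. This yields the spectral decomposition
\begin{equation*}
\mathbb{C}^{2n} = \bigoplus_{j=1}^{d}\bigl(S_{\sigma_j}\oplus S_{\bar\sigma_j}\bigr) \oplus S_{1} \oplus S_{-1},
\end{equation*}
each summand being a symplectic $L$-invariant block (with $S_{\pm1}$ present only if $\pm1$ are eigenvalues). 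Since $L$ acts by distinct eigenvalues on the different blocks, every $L$-invariant subspace respects this decomposition; in particular the unique P.L.I. subspace $R$ splits as $R = \bigoplus_j R_j \oplus R_{+1} \oplus R_{-1}$ with each summand a P.L.I. subspace within its block. Uniqueness of $R$ thus reduces to uniqueness of the P.L.I. subspace inside each block separately.

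Next, on a non-real block $S_\sigma \oplus S_{\bar\sigma}$ of multiplicity $p$, the scalar action of $L$ makes every invariant subspace of the form $V\oplus W$ with $V\subset S_\sigma$, $W\subset S_{\bar\sigma}$. Because $\sigma\ne\bar\sigma$, each summand is isotropic and the pairing between them is non-degenerate, so being Lagrangian in the block amounts to $\dim V+\dim W = p$ together with $V\perp W$. Using $\overline{S_\sigma}=S_{\bar\sigma}$ and the vanishing $[x,\bar y]=0$ for $x,\bar y\in S_\sigma$, a short computation gives
\begin{equation*}
\tfrac{1}{2i}[x+y,\overline{x+y}] = Q_\sigma(x)+Q_{\bar\sigma}(y), \qquad Q_\sigma(x):=\tfrac{1}{2i}[x,\bar x],
\end{equation*}
where $Q_\sigma$ is a non-degenerate Hermitian form on $S_\sigma$ of some signature $(s,t)$ with $s+t=p$, and $Q_{\bar\sigma}$ has signature $(t,s)$ (conjugation swaps the two summands and negates $Q$). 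Positivity of $V\oplus W$ then forces $V$ to be an $s$-dimensional $Q_\sigma$-positive-definite subspace and $W$ a $t$-dimensional $Q_{\bar\sigma}$-positive-definite subspace. When $0<s<p$ the Grassmannian of such $V$ is positive-dimensional, producing infinitely many P.L.I. subspaces of the block and contradicting uniqueness. Hence every non-real eigenvalue must be elliptic positive or elliptic negative, with the unique block P.L.I. being $S_{\sigma_j}$ or $S_{\bar\sigma_j}$ respectively.

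For the real eigenvalues, if $\dim S_{\pm1}>0$ then $L|_{S_{\pm1}}=\pm\,\mathrm{Id}$ makes every subspace invariant, while $S_{\pm1}$ is symplectic and therefore carries a positive-dimensional family of positive Lagrangian subspaces (its Siegel-type domain), again violating uniqueness. Thus $S_{\pm1}=\{0\}$. Combining the three steps, every eigenvalue of $L$ is elliptic on the unit circle, so the preceding proposition yields that $L$ is strongly stable. The main obstacle is the block computation in the second step: specifically, verifying the signature identity and deducing that a mixed block ($0<s<p$) genuinely produces a continuous family of P.L.I. subspaces; once that signature analysis is in hand, the remaining steps are routine bookkeeping on invariant decompositions.
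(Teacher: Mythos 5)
The paper never actually proves Proposition \ref{uniq1}; it is imported from \cite{bal2} together with the neighbouring statements on elliptic eigenvalues, so your proposal is a self-contained argument rather than a variant of one in the text — and as such it is essentially correct. You prove the implication as literally stated (unique P.L.I.\ subspace $\Rightarrow$ strongly stable) by the natural route: existence of a P.L.I.\ subspace gives stability, Proposition \ref{prop1} gives diagonalizability with unit-circle spectrum and hence the splitting into symplectic blocks $S_{\sigma}\oplus S_{\bar\sigma}$ and $S_{\pm1}$, any invariant subspace of a diagonalizable operator splits along eigenspaces, and the signature analysis of $Q_\sigma(x)=\tfrac{1}{2i}[x,\bar x]$ shows that a mixed block ($0<s<p$) or a nonzero $S_{\pm1}$ produces a continuum of block P.L.I.\ subspaces, so uniqueness forces every eigenvalue to be elliptic and the preceding proposition on elliptic eigenvalues applies. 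Two points should be written out to make the contradiction airtight: (i) for a maximal $Q_\sigma$-positive subspace $V\subset S_\sigma$ the Lagrangian condition determines $W$ as $\overline{V^{\perp_{Q_\sigma}}}$, and since $V^{\perp_{Q_\sigma}}$ is $Q_\sigma$-negative of dimension $t$ while conjugation reverses the sign of $Q$, this $W$ is automatically $Q_{\bar\sigma}$-positive, so each such $V$ genuinely yields a P.L.I.\ subspace of the block (otherwise the positive-dimensional Grassmannian of $V$'s proves nothing); (ii) to turn several block-level P.L.I.\ subspaces into several global ones you must check that the cross-block terms $[x_j,\bar x_l]$, $j\ne l$, vanish, which follows from $[S_\alpha,S_\beta]=0$ unless $\alpha\beta=1$ and the distinctness of the $\sigma_j$. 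Finally, note that the paper later uses Proposition \ref{uniq1} in the converse direction (strong stability $\Rightarrow$ uniqueness of the P.L.I.\ subspace); the statement is really an equivalence, and the converse also falls out of your block decomposition once one knows that strong stability forces all eigenvalues to be elliptic, so it would be worth recording both directions.
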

The following Lemma holds
\begin{lemma}
	If there exist a unique common subspace P.L.I to the reduce monodromy operator $\Xi_j$, with $j=1,\dots,k$ then
	there exist a unique complex germ on the torus $\Lambda^k$. 
	\label{lemf1}
\end{lemma}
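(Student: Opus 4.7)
The plan is to exploit the correspondence, essentially set up in Lemma~\ref{lemd1} and in the proof of the necessary-condition theorem immediately preceding this lemma, between common P.L.I.\ subspaces of the reduced monodromy operators $\Xi_j$ at a fixed point $m\in\Lambda^k$ and complex germs on $\Lambda^k$ invariant under the Hamiltonian flows associated to $F_1,\ldots,F_k$. Uniqueness on one side should transfer to uniqueness on the other via this correspondence.

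First I would fix $m\in\Lambda^k$ and define the map $\Phi$ that sends any invariant complex germ $r^n$ on $\Lambda^k$ to the subspace $R:=\Pi(r^n(m))\subset\Gamma_m$, where $\Pi:T_m(\Sigma)\to\Gamma_m$ is the canonical projection. The argument given in the necessary-condition theorem shows that $R$ is lagrangian, positive and invariant under every $\Xi_j$, so $\Phi$ lands in the set of common P.L.I.\ subspaces of $\Xi_1,\ldots,\Xi_k$. Next I would construct the reverse map $\Psi$: given a common P.L.I.\ subspace $R$, set $r^n(m):=\Pi^{-1}(R)$, which by Lemma~\ref{lemd1} is lagrangian, dissipative with respect to $T_m(\Lambda^k)$ and $G_j$-invariant; then propagate it to the rest of $\Lambda^k$ by the formula $r^n(p)=(g^{\vec{s}})_{*,m}(r^n(m))$ used in the proof of Theorem~\ref{thep}. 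Both $\Phi\circ\Psi$ and $\Psi\circ\Phi$ should then be identities by construction.

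For the injectivity of $\Phi$ I would argue as follows. Suppose two invariant complex germs $r^n$ and $\tilde r^n$ satisfy $\Pi(r^n(m))=\Pi(\tilde r^n(m))=R$. Because $r^n(m)\supset T_m(\Lambda^k)=\ker\Pi$ and $r^n(m)$ is lagrangian of dimension $n$, it is the full preimage $\Pi^{-1}(R)$; the same holds for $\tilde r^n(m)$, so the two germs coincide at $m$. For any other point $p\in\Lambda^k$, choose $\vec{s}\in\mathbb R^k$ with $g^{\vec{s}}m=p$ (which exists by surjectivity of $\vec{t}\mapsto g^{\vec{t}}m$); the invariance condition \eqref{der1a} forces $r^n(p)=(g^{\vec{s}})_{*,m}(r^n(m))=(g^{\vec{s}})_{*,m}(\tilde r^n(m))=\tilde r^n(p)$, so the two germs coincide everywhere on $\Lambda^k$.

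Combining these two steps yields the desired uniqueness: if the common P.L.I.\ subspace for $\Xi_1,\ldots,\Xi_k$ is unique, then $\Phi$ must take the same value on every invariant complex germ, and by injectivity of $\Phi$ all such germs coincide. The main obstacle I expect is the well-definedness of the propagation formula $r^n(p)=(g^{\vec{s}})_{*,m}(r^n(m))$ at a point $p$ that admits several representations $g^{\vec{s}_1}m=g^{\vec{s}_2}m$; but this is exactly the point where the $G$-invariance of $r^n(m)$, i.e.\ invariance under every monodromy operator $G_j$, rescues the definition, so this subtlety is already resolved by the hypothesis that $R$ is common P.L.I.\ for all the $\Xi_j$'s.
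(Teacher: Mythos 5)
Your proof is correct and follows essentially the same route as the paper: fix a point, project the germ via the canonical map $\Pi$ to obtain a common P.L.I.\ subspace of the $\Xi_j$, and transfer uniqueness back. Your write-up is in fact more complete than the paper's, which argues by contradiction from two distinct germs but leaves implicit the key step you supply — that a germ containing $\ker\Pi=T_m(\Lambda^k)$ is the full preimage of its projection (so distinct germs at a point yield distinct P.L.I.\ subspaces) and that agreement at one point propagates to all of $\Lambda^k$ by the invariance condition.
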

\begin{proof}
	Assume that there exist two distinct complex germ $r_1^n$ and $r_2^n$, i.e. there exist a point $m_o$ such that
	$r_1^n(m_o)\neq r_2^n(m_o)$. Let $\Pi$ the canonical subjective and let the subspace $R_j=\Pi(r_j(m_o))$, $j=1,2$ which are by definition
	lagragian and positive in $\Gamma_{m_o}$ and invariant respect the operators  $\Xi_j$, with $j=1,\dots,k$.
	Using analogous process to describe in Lemma \ref{lem1ab} is possible to construct more that one common P.L.I subspace to the operators $\Xi_j$, with $j=1,\dots,k$.
\end{proof}

Now we give sufficient condition for the existence of a unique complex germ.

\begin{theorem}
	In the reduce operator of monodromy are stable and there exist at least one strong stable then there exist a unique complex Germ on the torus $\Lambda^k$ invariant respect to the Hamiltonian system $(M^{2n},\omega^2,F_j)$ with $j=1,\ldots,k$.
\end{theorem}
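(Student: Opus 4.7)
The plan is to split the argument into existence and uniqueness, using Theorem \ref{thef1} for existence and reducing uniqueness via Lemma \ref{lemf1} to a question about common P.L.I subspaces of the reduced monodromy operators; the strongly stable operator is then what forces this common subspace to be unique.

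First I would handle existence: since every reduced monodromy operator $\Xi_1,\ldots,\Xi_k$ is stable by hypothesis, Theorem \ref{thef1} directly delivers a complex germ on $\Lambda^k$ invariant under the Hamiltonian flows of $F_1,\ldots,F_k$. No further work is needed at this stage, so the entire weight of the theorem falls on the uniqueness half.

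For uniqueness, I fix a point $m\in\Lambda^k$ and work in the quotient $\Gamma_m$. By Lemma \ref{mono1} at least one common P.L.I subspace for the family $\Xi_1,\ldots,\Xi_k$ already exists. Let $\Xi_{i_0}$ be the operator which is strongly stable; by Proposition \ref{uniq1} (read as a characterization) it admits a \emph{unique} P.L.I subspace $R_0\subset\Gamma_m$. Any common P.L.I subspace for the whole family is in particular P.L.I for the single operator $\Xi_{i_0}$, so it must coincide with $R_0$. Hence the common P.L.I subspace is unique, and Lemma \ref{lemf1} upgrades this to uniqueness of the complex germ on $\Lambda^k$.

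The main obstacle I anticipate is the direction of Proposition \ref{uniq1}: as stated in the excerpt it reads ``unique P.L.I $\Rightarrow$ strong stable'', while the argument above needs the converse. I would close this gap either by invoking the companion implication (standard in the theory of strongly stable symplectic operators), or by a short direct argument: strong stability of a stable symplectic operator forces each eigenvalue on the unit circle to be elliptic of definite sign, so the positive Lagrangian direction inside each eigenspace is rigidly forced, and the unique common P.L.I subspace is then the direct sum of these canonical positive eigen-pieces, which is automatically invariant under all the $\Xi_j$ because they commute with $\Xi_{i_0}$ and therefore preserve its eigenspace decomposition.
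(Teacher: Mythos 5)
Your proposal is correct and follows essentially the same route as the paper: invoke Theorem \ref{thef1} for existence, use the strongly stable operator together with Proposition \ref{uniq1} to pin down a unique P.L.I subspace, and conclude via Lemma \ref{lemf1}. Your observation that any \emph{common} P.L.I subspace is in particular P.L.I for $\Xi_{i_0}$ and hence equals $R_0$ actually makes the uniqueness step more explicit than the paper's wording, and your remark about the direction in which Proposition \ref{uniq1} is stated flags a real imprecision in the paper, which also uses the implication ``strong stable $\Rightarrow$ unique P.L.I subspace.''
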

\begin{proof}
	Let $\Xi_j$ for some $j \in J={1,\ldots,k}$ a strong stable operators. By Proposition \ref{uniq1} there exist a unique
	P.L.I subspace for  $\Xi_j$ which we denote by $R$. As the operator $\Xi_i$, with $i \in I\diagdown{j}$ commute with $\Xi_j$ then by Lemma
	\ref{mono1} is possible to construct a common P.L.I subspace for these operators, which is a unique.
	Then by Lemma \ref{lemf1} we have a Theorem.
\end{proof}

A necessary condition for the existence of germ is given the following
\begin{lemma}
	If there exist a unique complex germ on the torus $\Lambda^k$ invariant respect to the Hamiltonian system $(M^{2n},\omega^2,F_j)$ with $j=1,\ldots,k$. Then there exist a unique common P.L.I subspace for the reduce monodromy 
	operators $\Xi_j$, with $j=1,\dots,k$.
	\label{finlem}
\end{lemma}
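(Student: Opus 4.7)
My approach is to prove the contrapositive: if there exist two distinct common P.L.I.\ subspaces for the reduced monodromy operators $\Xi_j$, then the hypotheses of Theorem \ref{thep} can be applied to each of them to produce two distinct complex germs on $\Lambda^k$, contradicting the assumed uniqueness. Concretely, I would fix a base point $m_0 \in \Lambda^k$ and pick two distinct lagrangian, positive subspaces $R_1, R_2 \subset \Gamma_{m_0}$, each invariant under every $\Xi_j$. Setting $N_i = \Pi^{-1}(R_i)$ for $i=1,2$, Lemma \ref{lemd1} furnishes two lagrangian subspaces of $T_{m_0}(M^{2n})$, each containing $T_{m_0}(\Lambda^k)$, dissipative with respect to $T_{m_0}(\Lambda^k)$, and invariant under every monodromy operator $G_j$. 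Because $\Pi$ is surjective one has $\Pi(N_i) = R_i$, so $R_1 \neq R_2$ forces $N_1 \neq N_2$.

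Next I would invoke Theorem \ref{thep} twice, extending each $N_i$ to a complex germ $r_i^n$ on $\Lambda^k$ in the sense of Definition \ref{def1}, invariant with respect to the Hamiltonian flows $g^t_{F_j}$ for all $j=1,\ldots,k$. Since $r_i^n(m_0) = N_i$ and $N_1 \neq N_2$, the two germs differ at $m_0$, so they are genuinely distinct; this contradicts the uniqueness hypothesis and completes the argument.

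The one step that might look delicate is the well-definedness of the propagation rule $r_i^n(g^{\vec{s}} m_0) = (g^{\vec{s}})_{*, m_0}(N_i)$: for $\vec{s}, \vec{s}'$ with $g^{\vec{s}} m_0 = g^{\vec{s}'} m_0$, their difference lies in the lattice $G$, so $(g^{\vec{s}-\vec{s}'})_{*, m_0}$ is a composition of integer powers of the $G_j$, which preserves $N_i$ by the $G_j$-invariance granted by Lemma \ref{lemd1}. Since this compatibility check is already carried out inside the proof of Theorem \ref{thep}, I would invoke that theorem rather than redo the verification, and no genuine obstacle remains.
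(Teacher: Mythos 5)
Your proposal is correct and follows essentially the same route as the paper: both argue that distinct common P.L.I.\ subspaces $R_1\neq R_2$ would, via the construction of Lemma \ref{lemd1} and Theorem \ref{thep}, yield distinct complex germs, contradicting the assumed uniqueness. The only (minor) point left implicit is the \emph{existence} half of the conclusion, which follows from the hypothesized germ itself: by the necessary-condition theorem preceding this lemma, $R=\Pi(r^n(m))$ is already a common P.L.I.\ subspace for the $\Xi_j$.
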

\begin{proof}
Let the stable reduce monodromy operators $\Xi_j$, with $j=1,\dots,k$. Then by Lemma \ref{mono1} these operators have for a common P.L.I subspace $R \subset \Gamma_m$ for each $m \in \Lambda^k$. 

 From Theorem \ref{thep} (with this subspace), we can construct the complex germ $r^n=r^n(R)$. Besides, to distinct subspace correspond distinct Germ, i.e.
if $R_1 \neq R_2$ then for $r^n_1=r^n(R_1)$ and  $r^n_2=r^n(R_2)$ we have $ \forall m \in \Lambda^k$ that
$r^n_1(m) \neq r^n_2(m)$.
Assume that there exist $m \in \Lambda^k$ such $r^n_1(m) = r^n_2(m)$; let $m^{'} \in \Lambda^k$ which serves to construct the
reduce operator $\Xi_{m^{'}}$ and the subspace $R_1$ and $R_2$. Since there exist $\vec{s} \in \Re^k$ such that $g^{\vec{s}}(m^{'})=m$, and if we define $r_1(m)=g^{\vec{s}}_{*,m^{'}}(r^n(m^{'}))$; $r^n_1(m^{'})=R_1$ and 
$r^n_2(m)=g^{\vec{s}}_{*,m^{'}}(r^n(m^{'}))$; $r^n(m^{'})=R_2$ we obtain a contradiction $R_1=R_2$.
\end{proof}

We have the following 
\begin{lemma}
	In there is a unique P.L.I subspace common to the reduce monodromy operators  $\Xi_j$, with $j=1,\dots,k$ then these
	operators are stable and at least one is strong stable.
\end{lemma}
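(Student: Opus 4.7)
The plan is to establish the two conclusions of the lemma separately, with the stability of each $\Xi_j$ being a quick consequence of the hypothesis and the strong stability of at least one of them being the substantive content that must be argued by contradiction.

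First, I would observe that the hypothesized common P.L.I subspace $R \subset \Gamma_m$ is by definition invariant under every $\Xi_j$, and is lagrangian and positive in $\Gamma_m$. In particular, each individual operator $\Xi_j$ admits a P.L.I invariant subspace, namely $R$ itself. Invoking the equivalence between stability of a symplectic operator and the existence of a P.L.I invariant subspace (the characterization stated in the discussion preceding Proposition \ref{prop1} and proved in \cite{bal2}), we immediately conclude that each $\Xi_j$ is stable. This already gives, via Proposition \ref{prop1}, that every $\Xi_j$ is diagonalizable with all eigenvalues on the unit circle, a fact that will be essential below.

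Next, for the strong stability statement I would argue by contradiction. Assume that none of the $\Xi_j$ is strong stable. Then by the characterization of strong stability in terms of elliptic eigenvalues, each $\Xi_j$ admits at least one non-elliptic eigenvalue $\sigma_j$ on the unit circle, so the maximal invariant subspace $L_{\sigma_j}$ associated with $\sigma_j$ is neither positive nor negative. Because the operators commute and are diagonalizable, they admit a joint spectral decomposition of $\Gamma_m$ into common invariant symplectic subspaces indexed by tuples of joint eigenvalues. On each such joint eigenspace the operators act by scalars, and non-ellipticity for some $\Xi_j$ at that block translates into the existence of several distinct P.L.I choices for the block; this is exactly the source of freedom exploited in the constructions of Lemmas \ref{lem1ab}, \ref{lem2ab} and \ref{mono1}. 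Running the assembly procedure of Lemma \ref{mono1} with two different choices at such a block produces two distinct common P.L.I subspaces for the full family $\{\Xi_j\}$, contradicting the hypothesized uniqueness.

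The main obstacle is the third paragraph: one must verify carefully that the local freedom present at a non-elliptic joint eigenspace actually yields globally distinct common P.L.I subspaces, i.e.\ that the two different local choices glue with the forced choices on all other joint eigenspaces into two genuinely distinct lagrangian, positive, invariant subspaces of $\Gamma_m$. This is essentially a dimension and direct-sum bookkeeping argument on top of the construction already used in Lemma \ref{mono1}, but it requires handling the commuting action of the family $\{\Xi_j\}$ simultaneously rather than one operator at a time; once this is done, the contradiction with uniqueness closes the proof.
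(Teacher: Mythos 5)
Your overall strategy is the same as the paper's: read off the stability of each $\Xi_j$ from the fact that the common P.L.I.\ subspace is in particular a P.L.I.\ subspace for each operator separately (a point the paper's own two-line proof leaves completely implicit, so your second paragraph is actually an improvement), and then argue the strong-stability claim by contradiction via the constructions of Lemmas \ref{lem1ab}, \ref{lem2ab} and \ref{mono1}. The first half of your argument is correct as it stands.

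The step you yourself flag as ``the main obstacle'' is, however, not a matter of bookkeeping; it is a genuine gap, and it is exactly the gap in the paper's own proof. You claim that non-ellipticity of an eigenvalue $\sigma_j$ of some $\Xi_j$ ``translates into the existence of several distinct P.L.I.\ choices for the block.'' That is true for the single operator $\Xi_j$, but not for the commuting family: indefiniteness of the eigenspace $L_{\sigma_j}$ does not imply that any \emph{joint} eigenspace is indefinite, because the remaining operators may split $L_{\sigma_j}$ into joint eigenspaces on each of which the Hermitian form $(1/2i)[x,\bar{x}]$ is definite, and then the positive Lagrangian choice on every joint block is forced and no second common P.L.I.\ subspace exists. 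Concretely, on an $8$-dimensional symplectic space take commuting stable symplectic $A,B$ where $A$ has eigenvalues $\sigma,\sigma'$ (and conjugates), each with a $2$-dimensional eigenspace of signature $(1,1)$ split as $V_1\oplus V_2$ and $W_1\oplus W_2$ with $V_1,W_1$ positive lines and $V_2,W_2$ negative lines, while $B$ acts by $\mu_1$ on $V_1\oplus W_2$ and by $\mu_2\neq\mu_1$ on $V_2\oplus W_1$ (and conjugately on the conjugate spaces). Neither $A$ nor $B$ is strong stable, since each has an indefinite eigenspace, yet every joint eigenspace is a definite line and $V_1\oplus W_1\oplus\overline{V_2}\oplus\overline{W_2}$ is the \emph{unique} common P.L.I.\ subspace. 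So the contradiction you are aiming for cannot be derived from ``none of the $\Xi_j$ is strong stable'' alone: the uniqueness hypothesis only yields definiteness of the joint spectral blocks, and either an additional argument exploiting special structure of the reduced monodromy operators, or a reformulation of the conclusion in terms of the joint decomposition, is required. Since the paper's proof asserts the same unjustified implication, you have reproduced its argument faithfully, gap included.
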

\begin{proof}
	We assume that all the operators  $\Xi_j$, with $j=1,\dots,k$ are stable and none is strong stable then from Lemmas \ref{lem1ab}, \ref{lem2ab} it is possible to construct more of one P.L.I subspace common for two operators, i.e. $R_1$ and
	$R_2$. With the procedure of Lemma \ref{mono1} is possible to construct more of one common P.L.I for the
	operators $\Xi_j$, with $j=1,\dots,k$ . 
\end{proof}

We present a necessary condition for the complex germ

\begin{theorem}
	If there is a unique  complex germ invariant respect to the Hamiltonian system $(M^{2n},\omega^2,F_j)$ with $j=1,\ldots,k$.
	Then  the reduce monodromy operators  $\Xi_j$, with $j=1,\dots,k$ are stable and at least one is strong stable.
\end{theorem}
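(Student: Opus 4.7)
The plan is to chain the two lemmas established immediately before this theorem, so that the proof becomes essentially a two-line deduction. Starting from the hypothesis that there is a unique complex germ on the isotropic torus $\Lambda^k$ which is invariant under the Hamiltonian system $(M^{2n},\omega^2,F_j)$, $j=1,\ldots,k$, the first step is to invoke Lemma \ref{finlem}. That lemma takes uniqueness of the germ and transfers it to the quotient level: it asserts that the reduced monodromy operators $\Xi_1,\ldots,\Xi_k$ admit a unique common positive, Lagrangian, invariant subspace $R\subset\Gamma_m$. In particular this already forces each $\Xi_j$ to possess a common P.L.I. subspace, hence by Proposition \ref{prop1} each $\Xi_j$ is diagonalizable with spectrum on the unit circle, i.e. stable.

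The second step is to apply the lemma that immediately precedes the theorem, which provides exactly the remaining implication: if the stable operators $\Xi_1,\ldots,\Xi_k$ share a \emph{unique} common P.L.I. subspace, then at least one of them must be strong stable. This finishes the argument.

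The main conceptual obstacle is not in the present proof but is already packaged inside the preceding lemma, and it is worth flagging. One must exclude the scenario in which every $\Xi_j$ is merely stable but none is strong stable. In that case the eigenspace decomposition used in Lemmas \ref{lem1ab} and \ref{lem2ab} affords enough freedom (for instance, in the choice of P.L.I. subspaces inside the $\pm 1$-eigenspaces, or among non-elliptic blocks) to assemble at least two distinct common P.L.I. subspaces, which by Theorem \ref{thep} would produce two distinct invariant germs on $\Lambda^k$, contradicting uniqueness. Since this construction is already verified in the previous lemma, the theorem follows by simply composing the two implications: unique germ $\Rightarrow$ unique common P.L.I. subspace $\Rightarrow$ all $\Xi_j$ stable and at least one strong stable.
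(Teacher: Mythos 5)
Your proof is correct and follows essentially the same route as the paper, which disposes of the theorem by composing the preceding lemmas: Lemma \ref{finlem} converts uniqueness of the germ into uniqueness of the common P.L.I.\ subspace, and the unlabelled lemma immediately before the theorem converts that into stability of all the $\Xi_j$ plus strong stability of at least one. If anything, your identification of the two ingredients is more accurate than the paper's own citation, which points to Lemma \ref{lemf1} (the converse implication, not needed here) instead of the final lemma.
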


The proof follow from the Lemmas \ref{lemf1} and \ref{finlem}.

\section{Smoothness of the complex Germ}

A p-distribution  on a manifold $M$, with $p \le dim(M)$ is a map such that to each point $m \in M$ correspond a subspace p-dimensional $\theta(m)$ of the $T_m(M)$. We say that such distribution is smooth if there are p smooth vectorial fields 
$X_1,\ldots,X_k$ defined on a neighbor $U$ of the point $m$ such that $X_1(m),\ldots,X_k(m)$ generate the subspace $\theta(m)$.

Since the complex Germ is a n-dimensional distribution with certain additional properties (see Definition ) the smoothness property is a consequence of the own construction of the germ. From remark \ref{remhj1}  the germ is generate by a P.L.I subspace $R$.
We assume that this subspace is generated by the vector $r_1,\ldots,r_n$. Since the map $(g^t_{F_j})_{*,m}$ are smooths the
the vectorial fields $X_j$, $j=1,\ldots,k$ defined on each point $m \in \Lambda^k$ by
\begin{equation}
X_j(m)=(g^{\vec{t}})_{*,m_o}r_j, 
\end{equation}
where $\vec{t} \in \Re^k$ satisfy $g^{\vec{t}}m_o=m_o$, with $g^{\vec{t}}=g^{t_1}_1 o\ldots,g^{t_k}_k$ and
$g^{t_1}_j$ are Hamiltonian flows associated to the Hamilton functions $F_j$.
It is possible to check that $X_j$, $j=1,\dots,k$ are smooth and generated the germ $r^n(m)$.

\section{Application to the Hamiltonian system with cyclic variables}
\label{ultim1}

In this section we study a $2n$-dimensional symplectic manifold that contains a 2k-dimensional manifold ($k<n$) which consist of
$k$ invariant isotropic torus respect to certain Hamiltonian system. These system in the local coordinate  $(I,p,\theta,q)$ has
the form $H=H(I,p,q)$ and the rest are $k-1$ coordinates. These are called system with cyclic variables.
The basis and the problem are described following.  Let $(M^{2n},\omega^2)$ a symplectic manifold with canonical variables ${p,q}.
$.
\begin{definition}
	An atlas on the manifold $(M^{2n},\omega^2)$ is called symplectic if the the coordinate space $(p,q) \in \Re^{2n}$ the symplectic structure
	take the form $\omega^2=\sum_{i=1}^{n}dp_i \wedge dq_i$ and the cart 	$\Phi o \Theta^{-1} : \Theta(U_1\cap U_2) \rightarrow \Phi(W_1 \cap W_2)$ are symplectic transformations, where
	$\Phi : U_1 \rightarrow W_1$, $\Theta : U_2 \rightarrow W_2$,  with $U_1,U_2 \subset M^{2n} $ and $W_1, W_2 \subset \Re^{2n}$,
\end{definition}
\begin{definition}
	A Hamiltonian system $(M^{2n},w^2,H)$ with Hamilton function $H$ is called system with $k$ cyclic variables if there are the system of symplectic coordinates ${I,p,\theta,q}$ such that $H=H(I,p,q)$. The cyclic variables do not appear in the expression
	of the Hamilton function which we denote by $\theta=(\theta_1,\ldots,\theta_k)$.
\end{definition}
Let the variables $I=(I_1,\ldots,I_k)$, $p=(p_1,\ldots,k)$ canonical conjugate of the variables $\theta=(\theta_1,\ldots,\theta_k)mod2\pi$, $q=(q_1,\ldots,q_k)$ with $1<=k<=n$. i.e. the phase space $M^{2n}$ have the coordinate system  $(I,p,\theta,q)$ where the symplectic structure $w^2$ can be written in the form 
\begin{equation}
w^2=\sum_{i=1}^{k}dI_i \wedge \theta_i +\sum_{i=1}^{k}dp_i \wedge  q_i ,
\label{sim1}
\end{equation}
We assume that coordinate system define a diffeormorphism from $M^{2n}$ under the space consist of the direct product
of the torus $T^k$ with open region of $\Re^{2n-k}$. In this system the canonical equation of the Hamilton function take the form
\begin{equation}
I^{'}=0, \quad p^{'}=-H_q, \\
\theta^{'}=H_I, \quad q^{'}=H_p,
\end{equation}
where $H_q=(\partial H/ \partial q_1,\ldots,\partial H/ \partial q_k$ and $H_p=(\partial H/ \partial p_1,\ldots,\partial H/ \partial p_k$.

We consider the subset 
\begin{equation}
N=\{(I,p,\theta,q): H_p=H_q=0\},
\label{nn1}
\end{equation}
and we assume that is connected. Let us denote by $\Lambda^k=\Lambda^k(I_o,p_o,q_o)$ the k-dimensional torus defined by the the equalities $\{(I,p,\theta,q):I=I_o,p=p_o,q=q_o\}$.
It is possible to check that N represent the union of such torus.

We consider the restriction of the vectorial Hamiltonian field to the torus $\Lambda^k$ where a trajectory on such field take the form $g_H^t=(I_o,p_o,H_I t+ \theta,q_o)$ defined for all $t \in (-\infty,+\infty)$ because $\Lambda^k$ is compact. It is possible
to check that any torus $\Lambda \subset N$ is invariant respect to this system. Also, from the symplectic structure in \eqref{sim1} we deduce that $\omega^2 |_{\Lambda^k}=0$, i.e. the torus are isotropic. Therefore $H$ is the union of isotropic, invariant respect to the Hamiltonian system $(M^{2n},\omega^2,H)$.

We have the following Definition
\begin{definition}
	We say that F is first integral of the Hamiltonian system with function $H$ if the Poisson braked satisfy $[F,H]$=0.
\end{definition}
It is possible to check that described above the coordinate function $I_1,\ldots,I_k$ are first integral of the system 
$(M^{2n},w^2,H)$.

We have 
\begin{proposition}
	Assume that for any point in $N$ given in \eqref{nn1} the determinant of $Hess(H)$ is distinct of zero , i.e.
	\begin{equation}
	\mathrm{H_{pp}}\, \mathrm{H_{qq}} - \mathrm{H_{qp}}\, \mathrm{H_{pq}} \neq 0.
		\end{equation}
Then $N$ is sub-manifold symplectic 2k-dimensional that has locally the form of a map $(I,\theta) \rightarrow (p,q)$ that does not depend on the coordinate $\theta$, i.e. $p=P(I), q=q(Q)$.
\end{proposition}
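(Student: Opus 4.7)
The plan is to prove the three assertions (submanifold, dimension $2k$, symplectic) by a local graph construction obtained from the Implicit Function Theorem, and then compute the pulled-back form in a convenient parametrization.

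First, I would observe that because the Hamiltonian system has $k$ cyclic variables, $H = H(I,p,q)$ does not depend on $\theta$, so neither do $H_p$ nor $H_q$. Hence the defining equations for $N$ in \eqref{nn1} are $H_p(I,p,q)=0$ and $H_q(I,p,q)=0$, a system of $2(n-k)$ equations in the $2(n-k)$ unknowns $(p,q) \in \mathbb{R}^{n-k} \times \mathbb{R}^{n-k}$, with $I$ playing the role of parameter. The Jacobian of this system with respect to $(p,q)$ is precisely the Hessian block
\[
\mathrm{Hess}_{(p,q)}H \;=\; \begin{pmatrix} H_{pp} & H_{pq} \\ H_{qp} & H_{qq} \end{pmatrix},
\]
whose determinant is nonzero by hypothesis (this is the intended meaning of the condition $H_{pp}H_{qq}-H_{qp}H_{pq}\neq 0$). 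The Implicit Function Theorem then gives, in a neighbourhood of every point of $N$, a unique smooth solution $p = P(I)$, $q = Q(I)$, which automatically does not depend on $\theta$ because $\theta$ never enters the equations.

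Second, this implicit solution shows that $N$ is locally the graph
\[
\{(I, P(I), \theta, Q(I)) : I \in U,\ \theta \in T^k\}
\]
over an open subset $U \times T^k$ of $\mathbb{R}^k \times T^k$. Hence $N$ is a smooth submanifold of $M^{2n}$ of dimension $2k$, and the coordinates $(I,\theta)$ furnish a chart on $N$. Each invariant torus $\Lambda^k(I_0,p_0,q_0)\subset N$ is the slice $I\equiv I_0$, so $N$ is fibered by the tori $\Lambda^k$.

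Third, to check that $N$ is symplectic I would pull back $\omega^2 = \sum_{i=1}^{k} dI_i\wedge d\theta_i + \sum_{j=1}^{n-k} dp_j \wedge dq_j$ via the graph map. On $N$ we have $dp_j = \sum_i \partial P_j/\partial I_i\, dI_i$ and $dq_j = \sum_i \partial Q_j/\partial I_i\, dI_i$, so the second sum pulls back to a $2$-form $\sigma(I)$ involving only wedge products $dI_i\wedge dI_\ell$. Therefore
\[
\omega^2\!\mid_N \;=\; \sum_{i=1}^{k} dI_i\wedge d\theta_i + \sigma(I),
\]
whose matrix in the basis $(dI_1,\dots,dI_k,d\theta_1,\dots,d\theta_k)$ has the block form
\[
\begin{pmatrix} \Sigma(I) & -I_k \\ I_k & 0 \end{pmatrix},
\]
with $\Sigma$ antisymmetric $k\times k$. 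A direct computation (or block expansion along the lower-left $I_k$) shows that this matrix has determinant $\pm 1$ independently of $\Sigma$, so $\omega^2\!\mid_N$ is non-degenerate and $N$ is a symplectic submanifold.

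The only slightly delicate point is the interpretation of the hypothesis: the expression $H_{pp}H_{qq}-H_{qp}H_{pq}$ is not a scalar when $n-k>1$, so it must be read as the determinant of the full $(p,q)$-Hessian block, which is exactly what is needed to invoke the Implicit Function Theorem. Once this is granted, the rest of the argument is routine; the block computation of $\det\!\begin{pmatrix}\Sigma & -I_k\\ I_k & 0\end{pmatrix}=\pm 1$ is the cleanest way to avoid any dependence on the particular functions $P$ and $Q$.
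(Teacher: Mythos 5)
Your proof is correct and essentially self-contained. The paper itself does not prove this proposition --- it only refers the reader to \cite{bal2} --- so there is no internal argument to compare against; your write-up supplies exactly the standard argument one would expect that reference to contain: the Implicit Function Theorem applied to $H_p=H_q=0$ in the variables $(p,q)$ with $I$ as a parameter (using that $H$, hence $H_p$ and $H_q$, is independent of $\theta$), the resulting local graph description $p=P(I)$, $q=Q(I)$ giving the $2k$-dimensional submanifold structure, and the block-determinant computation showing that the pullback $\sum_i dI_i\wedge d\theta_i+\sigma(I)$ is non-degenerate regardless of the antisymmetric correction $\sigma$. You are also right to flag, and to resolve in the only sensible way, the two notational defects of the statement: the expression $H_{pp}H_{qq}-H_{qp}H_{pq}$ must be read as $\det\begin{pmatrix}H_{pp}&H_{pq}\\H_{qp}&H_{qq}\end{pmatrix}$ of the full $(p,q)$-block (and the indices on $p,q$ must run to $n-k$, not $k$, for the dimensions to be consistent with $M^{2n}$ and $\dim N=2k$). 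No gaps.
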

The proof of this result can be found in \cite{bal2}.

From now, we assume that $\Lambda^k$ consist of regular points, i.e. $H_I \neq 0$, as a consequence the functions $H, I2, \ldots, I_k$ are k first integral involution. The problem is under what conditions there is a complex Germ on $\Lambda^k$ with respect to the Hamiltonian system defined by Hamilton functions $H, I2, \ldots, I_k$.

The reduce monodromy operators for this case are the following: $G1=(g^t_H)_{*,m}, G_j=(g^t_{I_j})_{*,m}=E_{2n}$, with $j=2,\ldots,k$ and $E_{2n}$ is identity operator of order $2n$. This mean that the invariance condition if only necessary to check for the operator $G_1$. Applying the result of this paper we obtain the sufficient and necessary for the existence of Germ
different from those obtained in those obtained in \cite{arnold2}.

\section{Conclusion}

In this work we give answer to the question about the existence and uniqueness of a complex germ on a isotropic torus invariant 
respect to the Hamiltonian flows defined by k function $F_1,\ldots,F_k$ that stay in involution in the  phase state $M^{2n}$.

We proof that there exist such germ  if and only if the reduce monodromy operator with period
$T_j$, $j=1,\dots,k$ are stable. This germ is unique if at least one operator is strong stable.

The result obtained here were applied to the case of an Hamiltonian with cyclic variables resulting in new condition for the
existence and uniqueness of complex germ.

\end{document}